\newtheorem{thm}{Theorem}
\newtheorem{cor}[thm]{Corollary}
\newtheorem{con}[thm]{Conjecture}
\theoremstyle{remark}
\newtheorem{rem}[thm]{Remark}
\newtheorem{defi}[thm]{Definition}
\newtheorem{ex}[thm]{Example}
\newtheorem{prop}{Proposition}
\def\bbb{\mathbb}
\def\cal{\mathcal}
\newcommand{\bs}{\backslash}
\newcommand{\N}{\mathbb{N}}
\newcommand{\Z}{\mathbb{Z}}
\newcommand{\Q}{\mathbb{Q}}
\begin{document}

\author{Chenying Wang\footnote{The research of Chenying Wang was supported
by the Chinese National Science Foundation - Youth grant. Project no.: 11601239.}\\School of Mathematics~$\&$~Statistics\\
Nanjing University of Information Science~$\&$~Technology\\
Nanjing, P. R. China\and Piotr Miska\\Faculty of Mathematics and Computer Science\\Jagiellonian University\\Cracow, Poland\and Istv\'an Mez\H{o}\footnote{The research of Istv\'an Mez\H{o} was supported by the Scientific Research Foundation of Nanjing University of Information Science \& Technology, the Startup Foundation for Introducing Talent of NUIST. Project no.: S8113062001, and the National Natural Science Foundation for China. Grant no. 11501299.}\\School of Mathematics~$\&$~Statistics\\
Nanjing University of Information Science~$\&$~Technology\\
Nanjing, P. R. China}

\date{}

\title{The $r$-derangement numbers}
\maketitle

\begin{abstract}The classical derangement numbers count fixed point-free permutations.
In this paper we study the enumeration problem of generalized derangements, when some
of the elements are restricted to be in distinct cycles in the cycle decomposition.
We find exact formula, combinatorial relations for these numbers as well as analytic
and asymptotic description. Moreover, we study deeper number theoretical properties,
like modularity, $p$-adic valuations, and diophantine problems.
\end{abstract}

\section{Introduction}

The derangement number $D(n)$ denotes the number of fixed point-free
permutations (FPF for short) on $n$ letters. The inclusion-exclusion
principle yields a simple explicit expression for $D(n)$:
\begin{equation}
D(n)=n!\left(1-\frac{1}{1!}+\frac{1}{2!}+\cdots+\frac{(-1)^n}{n!}\right).\label{Dnclosed}
\end{equation}
The recursion
\begin{equation}
D(n)=(n-1)(D(n-1)+D(n-2))\quad(n\ge2)\label{Dnrec}
\end{equation}
with the initial values $D(0)=1$, $D(1)=0$ is easy to reproduce by simple combinatorial arguments.

An interesting fact what follows directly from \eqref{Dnclosed} is that
\begin{equation}
\frac{D(n)}{n!}\to\frac{1}{e}\quad(n\to\infty).\label{Dnconverges}
\end{equation}
The probabilistic interpretation of this fact is that for large $n$ a randomly selected permutation fails to have fixed point with probability $1/e\approx 0.367879441$ (in other words, this is the asymptotic ratio of FPF permutations in the whole). More on the derangement numbers can be found in \cite{Bona}.

This paper is devoted to study a \emph{subclass} of FPF permuations. Let us take the cycle decomposition of such a permutation. The FPF property obviously means that in this permutation any cycle is of length greater than one. What we add to this requirement is the following. We take a permutation on $n+r$ letters and we restrict the first $r$ of these to be in \emph{distinct} cycles. We arrive at the definition of the subject of the paper.

\begin{defi}An FPF permutation on $n+r$ letters will be called \emph{FPF $r$-permutation}
if in its cycle decomposition the first $r$ letters appear to be in \emph{distinct} cycles.
The number of FPF $r$-permutations denote by $D_r(n)$ and call
\emph{$r$-derangement number}. The first $r$ elements, as well as the cycles they are
contained in, will be called \emph{distinguished}.
\end{defi}

This definition was motivated by the extensive study of the so-called $r$-Stirling
numbers of the first kind \cite{Broder} which count permutations with a fixed number
of cycles where the same restriction on the first distinguished elements is added.
Without this restriction we get the classical Stirling numbers \cite{GKP}.

Some recent (and not so recent) papers are studying this restriction with respect
to other combinatorial objects, like set partitions \cite{Broder}, ordered
lists \cite{Nyul}, permutation statistics \cite{Shattuck}.

It follows from the definition that $n$ must be greater than or equal
to $r$, i.e., $D_r(n)=0$ if $n<r$ and it is equally easy to see that
\begin{equation}
D_1(n)=D(n+1),\quad D_r(r)=r!\;(r\ge1),\quad\mbox{and}\quad D_r(r+1)=r(r+1)!\;(r\ge2).\label{specvals}
\end{equation}

These are the initial values for the below basic recursion of the $r$-derangement numbers.

\begin{thm}\label{prop_recur}For all $n>2$ and $r>0$ we have that
\begin{equation}
D_r(n)=rD_{r-1}(n-1)+(n-1)D_r(n-2)+(n+r-1)D_r(n-1).\label{Dnrrecur}
\end{equation}
\end{thm}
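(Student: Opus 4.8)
The plan is to prove \eqref{Dnrrecur} combinatorially by sorting the FPF $r$-permutations of $[n+r]=\{1,\dots,n+r\}$ according to the cycle that contains the largest ordinary letter $m:=n+r$. (Since the distinguished letters are $1,\dots,r$ and the ordinary ones are $r+1,\dots,n+r$, and $n>2$, this $m$ is indeed an ordinary letter.) Every FPF $r$-permutation $\sigma$ has $m$ on a cycle of length $\ge 2$, hence falls into exactly one of three mutually exclusive classes: (A) $m$ sits in a $2$-cycle whose other element is ordinary; (B) $m$ sits in a $2$-cycle whose other element is distinguished; (C) $m$ lies on a cycle of length at least $3$. I will count each class and show the counts are $(n-1)D_r(n-2)$, $rD_{r-1}(n-1)$, and $(n+r-1)D_r(n-1)$ respectively; adding them gives \eqref{Dnrrecur}.

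For class (A), I would pick the ordinary partner $j$ of $m$ among the $n-1$ ordinary letters $r+1,\dots,n+r-1$, delete the transposition $(m\,j)$, and relabel the remaining $n-2$ ordinary letters order-preservingly; deleting a cycle that contains no distinguished element leaves the other $r$ distinguished letters in distinct cycles and creates no fixed point, so the result is an arbitrary FPF $r$-permutation on $(n-2)+r$ letters, and the construction is clearly reversible. This gives $(n-1)D_r(n-2)$. Class (B) is handled the same way, except that the partner of $m$ is now one of the $r$ distinguished letters $i$; deleting $(m\,i)$ and relabelling the surviving $r-1$ distinguished and $n-1$ ordinary letters yields an arbitrary FPF $(r-1)$-permutation on $(n-1)+(r-1)$ letters, whence $rD_{r-1}(n-1)$.

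For class (C), set $j:=\sigma^{-1}(m)$. As $\sigma$ is fixed-point-free, $j\neq m$, so $j$ ranges over the $n+r-1$ elements of $[n+r]\setminus\{m\}$. I would splice $m$ out of its cycle, i.e.\ replace the local chain $\cdots\to j\to m\to k\to\cdots$ by $\cdots\to j\to k\to\cdots$; since the cycle had length $\ge 3$ it now has length $\ge 2$, so no fixed point is created and the distinguished letters are still in distinct cycles, leaving an arbitrary FPF $r$-permutation $\sigma'$ on $[n+r-1]=[(n-1)+r]$. Conversely, from any such $\sigma'$ and any element $j\in[(n-1)+r]$ one inserts $m$ immediately after $j$; because $\sigma'$ is fixed-point-free, the cycle of $j$ already has length $\ge 2$, so the new cycle containing $m$ has length $\ge 3$ and we land back in class (C). Thus the two maps are mutually inverse and class (C) has size $(n+r-1)D_r(n-1)$. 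Summing the three contributions proves the theorem.

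I expect the main obstacle to be the verification in class (C) that the splicing and insertion maps really are inverse bijections \emph{onto the correct sets}: one must confirm that deleting $m$ from a cycle of length $\ge 3$ can never drop it below length $2$ (so that the output is genuinely fixed-point-free), and, conversely, that inserting $m$ right after an arbitrary element always produces a cycle of length $\ge 3$ (so that the output genuinely lies in class (C) and not in (A) or (B)); both rest on the fact that a fixed-point-free permutation has every cycle of length $\ge 2$. A secondary point worth recording is that the hypothesis $n>2$ ensures $n-2\ge 1$, so that $D_r(n-2)$, $D_{r-1}(n-1)$ and $D_r(n-1)$ are the honest counts on nonempty letter sets and the three-way split above is exhaustive and disjoint.
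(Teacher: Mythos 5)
Your proof is correct and is essentially the same argument as the paper's: the paper builds an FPF $r$-permutation by inserting the element $n+r$ into a smaller one and splits into the same three cases (transposition with a distinguished partner, transposition with an ordinary partner, cycle of length $\ge 3$), which are exactly your classes (B), (A), (C) read in the deletion direction. Your version just makes the inverse bijections and the fixed-point-freeness checks explicit.
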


\begin{proof} If we would like to construct an FPF $r$-permutation on $n+r$ elements recursively, we can start with a similar permutation on $n+r-1$ elements. Adding the last element $n+r$ to such a permutation we have two main cases:

\begin{enumerate}
\item The new element is in a transposition (i.e., a two-length cycle). In this case we have two sub-cases.\begin{enumerate}
\item The new element shares its cycle with a distinguished element. This offers $r$ cases for this cycle. Also, the permutation we start with already must be an FPF $(r-1)$-partition on $n+r-2$ elements. There are $D_{r-1}(n-1)$ such permutations. The first term on the right now comes.

\item The new element shares its cycle with one non-distinguished element from that of $n-1$. The rest of the permutation is an FPF $r$-permutation on $n+r-2$ elements. This explains the second term in \eqref{Dnrrecur}.
\end{enumerate}

\item The new element is in a cycle longer than 2. Then this element is inserted somewhere between two elements in the permutation or at the end. Since we have $n+r-1$ elements, we have $n+r-1$ different places to insert. The number of initial permutations is $D_r(n-1)$. This case is counted by the third term.
\end{enumerate}
\end{proof}

An additional complexity of the recursion, compared to the particularly simple \eqref{Dnrec}, comes from the fact that it uses not only the previous two elements of the sequence but an element from the sequence with index $r-1$.

The first members of the sequence starting from $D_2(2)$ are
\[2,\, 12,\, 84,\, 640,\, 5\,430,\, 50\,988,\, 526\,568,\, 5\,940\,576,\, 72\,755\,370,\, 961\,839\,340,\, 13\,656\,650\,172, \dots,\]
while the first members of $D_3(n)$ starting from $n=3$ are
\[6,\, 72,\, 780,\, 8\,520,\, 97\,650,\, 1\,189\,104,\, 15\,441\,048,\, 213\,816\,240,\, 3\,152\,287\,710,\, 49\,369\,524\,600,\dots.\]

\section{Fundamental properties of the $r$-derangement numbers}

\subsection{Exponential generating function of $D_r(n)$}

First, we give the exponential generating function of the sequence of $r$-derangements numbers and then deduce some combinatorial relations.

\begin{thm}
For any $r\in\N$ for the exponential generating function of the sequence of $r$-derangements numbers we have that
\begin{equation*}
F_r(x):=\sum_{n=0}^{+\infty} \frac{D_r(n)}{n!}x^n=\frac{x^re^{-x}}{(1-x)^{r+1}}.
\end{equation*}
\end{thm}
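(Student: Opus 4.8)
The plan is to use the symbolic method for labeled structures (the exponential formula), decomposing an FPF $r$-permutation according to its cycles. Fix the ground set of $n+r$ elements with distinguished elements $1,\dots,r$, and regard $F_r(x)$ as an EGF in the variable $x$ that marks the $n$ non-distinguished elements. Such a permutation splits canonically into: for each $i\in\{1,\dots,r\}$ the unique cycle containing the distinguished element $i$ (the $i$-th distinguished cycle), and the remaining cycles, which contain only non-distinguished elements. Since the permutation is fixed-point free, every cycle has length at least $2$; in particular each distinguished cycle carries at least one non-distinguished element.

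First I would find the EGF of a single distinguished cycle as a function of its non-distinguished content: a cycle made of the fixed distinguished element together with a $k$-subset of non-distinguished elements has $k!$ cyclic orderings, contributing $k!\cdot x^k/k!=x^k$, and summing over $k\ge1$ gives $\frac{x}{1-x}$. The $r$ distinguished cycles are pairwise distinguishable (the $i$-th is tagged by the element $i$), so distributing non-distinguished elements among them is an ordered, i.e. labeled-product, operation and contributes $\bigl(\frac{x}{1-x}\bigr)^r$, with no $1/r!$ symmetry factor. Next, a single ordinary cycle of length $k\ge2$ has $(k-1)!$ orderings, contributing $x^k/k$, so its EGF is $\sum_{k\ge2}x^k/k=-\log(1-x)-x$; by the exponential formula an arbitrary (possibly empty) family of such cycles has EGF $\exp(-\log(1-x)-x)=\frac{e^{-x}}{1-x}$, which is just the classical derangement generating function associated with \eqref{Dnclosed}.

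Finally, because the $n$ non-distinguished elements are partitioned freely between the $r$ distinguished cycles and the ordinary part, the product rule for EGFs of labeled structures yields
\[
F_r(x)=\Bigl(\frac{x}{1-x}\Bigr)^r\cdot\frac{e^{-x}}{1-x}=\frac{x^re^{-x}}{(1-x)^{r+1}}.
\]
As consistency checks, extracting coefficients gives $D_r(r)=r!\,[x^0]\frac{e^{-x}}{(1-x)^{r+1}}=r!$ and $D_r(r+1)=(r+1)!\,[x^1]\frac{e^{-x}}{(1-x)^{r+1}}=r(r+1)!$, in agreement with \eqref{specvals}, and $r=0$ recovers the classical EGF of $D(n)$.

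I expect the one delicate point to be the bookkeeping in the second paragraph: one must use that the FPF condition forbids a distinguished element from being a fixed point (which is what replaces the naive $\frac{1}{1-x}$ by $\frac{x}{1-x}$), and that the $r$ distinguished cycles are genuinely labeled, so one multiplies their EGFs rather than forming a set. As a fallback I would instead translate the recursion \eqref{Dnrrecur} into a first-order linear ODE for $F_r$ in $x$ — the coefficients $n-1$ and $n+r-1$ generate $F_r'$ and $xF_r'$ terms while $F_{r-1}$ appears as an inhomogeneous term — and solve it via an integrating factor, inducting on $r$ from $F_0(x)=\frac{e^{-x}}{1-x}$; there the work is in correctly incorporating the low-order terms where the recursion is not yet valid.
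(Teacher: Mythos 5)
Your proof is correct, and it takes a genuinely different route from the paper. The paper proceeds by induction on $r$: it feeds the recursion \eqref{Dnrrecur} into the series for $F_r$, differentiates, and obtains the first-order linear ODE $(1-x)F_r'(x)=\frac{rx^{r-1}e^{-x}}{(1-x)^r}+(x+r)F_r(x)$ with $F_r(0)=0$, whose unique solution is then checked to be $\frac{x^re^{-x}}{(1-x)^{r+1}}$ — exactly the fallback you sketch in your last paragraph. Your main argument instead applies the symbolic method directly: the factor $\bigl(\frac{x}{1-x}\bigr)^r$ for the $r$ labeled distinguished cycles (each forced to carry at least one non-distinguished element by the FPF condition, each contributing $k!\cdot x^k/k!=x^k$) times the derangement factor $\frac{e^{-x}}{1-x}=\exp(-\log(1-x)-x)$ for the set of ordinary cycles, combined by the labeled product rule. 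The bookkeeping you flag as delicate is handled correctly: the distinguished cycles are tagged by $1,\dots,r$ and hence multiplied without a $1/r!$, and marking only the non-distinguished elements is consistent throughout. What your approach buys is a self-contained combinatorial derivation that does not rely on Theorem \ref{prop_recur} at all (and could in fact be used to re-derive the recursion and identity \eqref{eq2}, whose combinatorial proof in the paper's Remark is close in spirit to your cycle decomposition); what the paper's approach buys is a uniform inductive scheme anchored at the known cases $r=0,1$ that exercises the recursion it has just established.
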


\begin{proof}
We will prove the statement of the theorem by induction on $r$.

If $r=0$ then $D_r(n)=D(n)$, $n\in\N$, is a classical derangement number and one can easily show \cite[p. 106, Example 3.56]{Bona} that the exponential generating function of the derangement numbers is $F_0(x)=\frac{e^{-x}}{1-x}$.

If $r=1$ then $D_r(n)=D(n+1)$ for any $n\in\N$, hence
\begin{equation*}
\begin{split}
F_1(x) & =\sum_{n=0}^{+\infty} \frac{D(n+1)}{n!}x^n=\sum_{n=0}^{+\infty} \frac{D(n+1)}{(n+1)!}\cdot (n+1)x^n =\sum_{n=1}^{+\infty} \frac{D(n)}{n!}\cdot nx^{n-1}=F'_0(x)=\frac{xe^{-x}}{(1-x)^2}.
\end{split}
\end{equation*}

Let us assume now that $r\geq 2$ and $F_{r-1}(x)=\frac{x^{r-1}e^{-x}}{(1-x)^r}$. We apply our recursion \eqref{Dnrrecur} and the fact that $D_r(n)=0$ for $n<r$ to obtain the following
\[F_r(x) =\sum_{n=r}^{+\infty} \frac{D_r(n)}{n!}x^n
=\sum_{n=r}^{+\infty}\left(\frac{r}{n}\cdot\frac{D_{r-1}(n-1)}{(n-1)!}
                           +\frac{1}{n}\cdot\frac{D_r(n-2)}{(n-2)!}
                           +\frac{n+r-1}{n}\cdot\frac{D_r(n-1)}{(n-1)!}\right)x^n.\]
After differentiation this turns to be
\begin{equation*}
\begin{split}
F'_r(x) & = r\cdot \sum_{n=r}^{+\infty} \frac{D_{r-1}(n-1)}{(n-1)!}x^{n-1} + x\cdot\sum_{n=r}^{+\infty} \frac{D_{r}(n-2)}{(n-2)!}x^{n-2}+ \\ & + (r-1)\cdot\sum_{n=r}^{+\infty} \frac{D_{r}(n-1)}{(n-1)!}x^{n-1} + \sum_{n=r}^{+\infty} \frac{D_{r}(n-1)}{(n-1)!}\cdot nx^{n-1} \\
& = rF_{r-1}(x) + (x+r-1)F_r(x) + (xF_r(x))' = \frac{rx^{r-1}e^{-x}}{(1-x)^r} + (x+r)F_r(x) + xF'_r(x),
\end{split}
\end{equation*}
by the induction hypothesis. We thus obtain the following nonhomogeneous linear differential equation
\begin{equation}\label{eq1}
(1-x)F'_r(x) = \frac{rx^{r-1}e^{-x}}{(1-x)^r} + (x+r)F_r(x)
\end{equation}
with initial condition $F_r(0)=D_r(0)=0$. The solution of the equation \eqref{eq1} is unique and it can be checked easily that it is $\frac{x^re^{-x}}{(1-x)^{r+1}}$, indeed.
\end{proof}

\subsection{Combinatorial identities}

Knowing the exponential generating function of the $r$-derangement number sequences allows us to deduce some identities. These will be proven by combinatorial arguments, too.

\begin{thm}
Let $r\in\N_+$ and $s\in\{1,...,r\}$. Then for each $n\geq s$ we have
\begin{equation}\label{eq2}
D_r(n)=\sum_{j=s}^n {j-1\choose s-1}\frac{n!}{(n-j)!}D_{r-s}(n-j).
\end{equation}
In particular,
\begin{equation}\label{eq3}
D_r(n)=\sum_{j=r}^n {j-1\choose r-1}\frac{n!}{(n-j)!}D(n-j),\mbox{ } n\geq r.
\end{equation}
Additionally, we have a closed formula for $r$-derangements numbers:
\begin{equation}\label{eq4}
D_r(n)=\sum_{j=r}^n {j\choose r}\frac{n!}{(n-j)!}(-1)^{n-j},\mbox{ } n\geq r.
\end{equation}
\end{thm}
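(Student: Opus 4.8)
The plan is to read all three identities off the closed form $F_r(x)=\dfrac{x^re^{-x}}{(1-x)^{r+1}}$ established above, and to supplement the first two with a bijective interpretation. For \eqref{eq2}, note that the closed form gives, for every $s\in\{1,\dots,r\}$, the factorization $F_r(x)=\bigl(\tfrac{x}{1-x}\bigr)^{s}F_{r-s}(x)$. Since $\bigl(\tfrac{x}{1-x}\bigr)^{s}=x^{s}(1-x)^{-s}=\sum_{j\ge s}\binom{j-1}{s-1}x^{j}$, comparing the coefficient of $x^{n}$ on both sides and multiplying by $n!$ yields \eqref{eq2} at once; taking $s=r$, so that $F_{r-s}=F_0$ is the classical derangement generating function and $D_{r-s}=D$, gives \eqref{eq3}.

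Alongside this I would record the combinatorial reading, which is what makes the $\binom{j-1}{s-1}$ transparent. In an FPF $r$-permutation on $n+r$ letters, look at the $s$ distinguished cycles containing $1,\dots,s$ and suppose they contain $j$ of the $n$ non-distinguished letters altogether, necessarily $j\ge s$. Reading each such cycle starting just after its distinguished anchor encodes it by the word of its non-anchor letters, so the $s$ distinguished cycles correspond bijectively to an ordered selection of $j$ of the $n$ non-distinguished letters together with a cut of that word into $s$ nonempty blocks, block $i$ being attached after anchor $i$; this accounts for $\tfrac{n!}{(n-j)!}\binom{j-1}{s-1}$ configurations. What remains — the letters $s+1,\dots,r$, now playing the role of distinguished elements, together with the $n-j$ untouched non-distinguished letters — is an arbitrary FPF $(r-s)$-permutation, contributing $D_{r-s}(n-j)$. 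Summing over $j$ recovers \eqref{eq2}, and the $s=r$ case recovers \eqref{eq3}.

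For \eqref{eq4}, I would substitute the classical formula \eqref{Dnclosed}, $D(m)=m!\sum_{i=0}^{m}\tfrac{(-1)^i}{i!}$, into \eqref{eq3}, obtaining $D_r(n)=n!\sum_{j=r}^{n}\binom{j-1}{r-1}\sum_{i=0}^{n-j}\tfrac{(-1)^i}{i!}$; then reindex by $k=n-j$, interchange the two summations, collapse the inner sum via the hockey-stick identity $\sum_{\ell=r-1}^{\,n-i-1}\binom{\ell}{r-1}=\binom{n-i}{r}$, and set $j=n-i$ to arrive at \eqref{eq4}. (Alternatively, \eqref{eq4} drops out directly from $F_r(x)=x^{r}e^{-x}(1-x)^{-(r+1)}$ by convolving $e^{-x}=\sum_i\tfrac{(-1)^i}{i!}x^i$ with $(1-x)^{-(r+1)}=\sum_k\binom{k+r}{r}x^k$ and reading off the coefficient of $x^n$.) I expect the only genuine obstacle to be bookkeeping: getting the index ranges and, above all, the block-nonemptiness condition exactly right so that the decomposition in the second paragraph is a true bijection (it is precisely nonemptiness of the blocks that forces the distinguished cycles to have length $\ge 2$), and applying the hockey-stick step cleanly when passing from \eqref{eq3} to \eqref{eq4}.
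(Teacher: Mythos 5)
Your proposal is correct and follows essentially the same route as the paper: the factorization $F_r(x)=\left(\frac{x}{1-x}\right)^{s}F_{r-s}(x)$ with coefficient comparison for \eqref{eq2} and \eqref{eq3}, and substitution of \eqref{Dnclosed} into \eqref{eq3} followed by an interchange of summation and the hockey-stick identity for \eqref{eq4}. Even your supplementary material matches the paper, which records the same cycle-decomposition bijection in a remark and the same alternative expansion of $F_r(x)=\frac{1}{x}\left(\frac{x}{1-x}\right)^{r+1}e^{-x}$ as a second derivation of \eqref{eq4}.
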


\begin{proof}
Let us expand the function $F_r$ as follows:
\begin{equation*}
\begin{split}
F_r(x) & =\frac{x^re^{-x}}{(1-x)^{r+1}}=\left(\frac{x}{1-x}\right)^s\cdot F_{r-s}(x)=\left(\sum_{j=1}^{+\infty} x^j\right)^s\cdot\left(\sum_{k=0}^{+\infty} \frac{D_{r-s}(k)}{k!}x^k\right) \\
& =\left(\sum_{j=s}^{+\infty} {j-1\choose s-1}x^j\right)\cdot\left(\sum_{k=0}^{+\infty} \frac{D_{r-s}(k)}{k!}x^k\right)=\sum_{n=s}^{+\infty} \left(\sum_{j=s}^{+\infty} {j-1\choose s-1}\frac{D_{r-s}(n-j)}{(n-j)!}\right)x^n.
\end{split}
\end{equation*}
By comparing the coefficients we conclude the equality
\begin{equation*}
\frac{D_r(n)}{n!}=\sum_{j=s}^n {j-1\choose s-1}\frac{D_{r-s}(n-j)}{(n-j)!}
\end{equation*}
for $n\geq s$. This establishes identities (\ref{eq2}) and (\ref{eq3}).

For the proof of the closed formula for $D_r(n)$ we use \eqref{Dnclosed}:
\begin{equation*}
\begin{split}
D_r(n) & =\sum_{j=r}^n {j-1\choose r-1}\frac{n!}{(n-j)!}D(n-j)=n!\cdot\sum_{j=r}^n {j-1\choose r-1}\left(\sum_{k=0}^{n-j} \frac{(-1)^k}{k!}\right) \\
& =n!\cdot\sum_{k=0}^{n-r} \frac{(-1)^k}{k!}\left(\sum_{j=r}^{n-k} {j-1\choose r-1}\right)=n!\cdot\sum_{k=0}^{n-r} {n-k\choose r}\frac{(-1)^k}{k!} =\sum_{j=r}^n {j\choose r}\frac{n!}{(n-j)!}(-1)^{n-j}.
\end{split}
\end{equation*}
\end{proof}

Identity \eqref{eq4} can be proven in another way. Namely, we can expand the function $F_r(x)$ as follows:
\begin{equation*}
\begin{split}
F_r(x) & =\frac{1}{x}\cdot\left(\frac{x}{1-x}\right)^{r+1}\cdot e^{-x}=\frac{1}{x}\cdot\left(\sum_{j=r+1}^{+\infty} {j-1\choose r}x^j\right)\cdot\left(\sum_{k=0}^{+\infty} \frac{(-1)^k}{k!}x^k\right) \\
& =\left(\sum_{j=r}^{+\infty} {j\choose r}x^j\right)\cdot\left(\sum_{k=0}^{+\infty} \frac{(-1)^k}{k!}x^k\right)=\sum_{n=r}^{+\infty} \left(\sum_{j=r}^{n} {j\choose r}\frac{(-1)^{n-j}}{(n-j)!}\right)x^n
\end{split}
\end{equation*}
and compare the corresponding coefficients on both sides.

\begin{rem}
Moreover, one can prove (\ref{eq2}) by using the following combinatorial argument, too. Let us fix $r,s\in\N_+$ with $s\leq r$. In order to construct an $r$-derangement we choose a number $j$ to be a number of non-distinguished elements which will be contained in $s$ first distinguished cycles. Certainly, $j\geq s$. Next we choose these $j$ non-distinguished elements. The $s$ cycles built by them have lengths $1+i_1$, ..., $1+i_s$ respectively, where $i_1, ..., i_s>0$ and $i_1+...+i_s=j$. We choose the $s$-tuple $(i_1, ..., i_s)$ in $j-1\choose s-1$ ways. Next we choose the first non-distingushed element after $1$ in the first distinguished cycle in $n$ ways, the second element after $1$ in $n-1$ ways, and at the end, the $i_1$-th element after $1$ in $n-i_1+1$ ways. Moreover, we choose the first non-distinguished element after $2$ in the second distinguished cycle in $n-i_1$ ways, the second element after $2$ in $n-i_1-1$, and so on. Hence, in total, we choose $j$ non-distinguished elements in distinguished cycles in $\frac{n!}{(n-j)!}$ ways. Finally, the remaining $n-j$ non-distinguished elements and $r-s$ distinguished elements create an $(r-s)$-derangement. We can choose this $(r-s)$-derangement in $D_{r-s}(n-j)$ ways. We then conclude that an $r$-derangement can be chosen in $\sum_{j=s}^{n} {j-1\choose s-1}\frac{n!}{(n-j)!}D_{r-s}(n-j)$ ways.
\end{rem}

\subsection{Asymptotics}

By using the above results we are going to establish asymptotic estimates for $D_r(n)$. Later we will use probabilistic arguments and Lah numbers, so that we will provide three independent proofs.

\begin{thm}\label{t2}
If $r\in\N_+$ and $n\geq r$ then $\left|D_r(n)-\frac{n!}{e}{n-1\choose r}\right|<2n!{n-1\choose r-1}$. In particular, for each $r\in\N_+$ the sequence $\left(\frac{D_r(n)}{(n+r)!}\right)_{n\in\N}$ is convergent to $\frac{1}{r!\cdot e}$.
\end{thm}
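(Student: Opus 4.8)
The plan is to prove the inequality directly from the closed formula \eqref{eq4} and then derive the convergence statement as an easy corollary. Starting from $D_r(n)=\sum_{j=r}^n {j\choose r}\frac{n!}{(n-j)!}(-1)^{n-j}$, I would substitute $k=n-j$ to rewrite this as $D_r(n)=n!\sum_{k=0}^{n-r}{n-k\choose r}\frac{(-1)^k}{k!}$. The idea is to compare this with the series $\frac{n!}{e}{n-1\choose r}$, but a cleaner target is the ``truncated'' analogue, so first I would isolate the main term. Writing $\binom{n-k}{r}$ as a polynomial in $k$ and peeling off, the dominant contribution comes from extending the sum to infinity; the tail $\sum_{k=n-r+1}^{\infty}$ is tiny. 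The key algebraic step is to handle $\binom{n-k}{r}$ cleanly: I would use the identity $\binom{n-k}{r}=\sum_{i}(-1)^i\binom{k}{i}\binom{n-k+?}{?}$ — actually the simplest route is the Vandermonde-type expansion $\binom{n-k}{r}=\sum_{i=0}^{r}(-1)^i\binom{k}{i}\binom{n-i}{r-i}\cdot(\text{something})$; rather than fuss, I would more directly write $\binom{n-k}{r}-\binom{n}{r}$ as a multiple of $k$ times a bounded quantity.

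Concretely, here is the cleaner decomposition I would carry out. Use $\binom{n-k}{r}=\binom{n-1-k}{r}+\binom{n-1-k}{r-1}$ repeatedly, or better, note that the sum $S:=\sum_{k=0}^{n-r}\binom{n-k}{r}\frac{(-1)^k}{k!}$ can be compared to $\binom{n-1}{r}\sum_{k=0}^{\infty}\frac{(-1)^k}{k!}=\binom{n-1}{r}/e$ after one telescoping: since $\binom{n-k}{r}-\binom{n-k-1}{r}=\binom{n-k-1}{r-1}$, an Abel summation / reindexing turns $\sum_k \binom{n-k}{r}\frac{(-1)^k}{k!}$ into a "derangement-of-shifted-index" expression. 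Then I would invoke the elementary bound that the partial sums of $\sum (-1)^k/k!$ differ from $1/e$ by at most the first omitted term (which is $<1/(n-r+1)!$), and that $\sum_{k\ge 0}\binom{n-k-1}{r-1}\frac{1}{k!}\le e\binom{n-1}{r-1}$ crudely, or even better $\sum_{k\ge0}\binom{n-1-k}{r-1}\frac{|(-1)^k|}{k!}\le 2\binom{n-1}{r-1}$ since $\binom{n-1-k}{r-1}\le\binom{n-1}{r-1}$ and $\sum 1/k! = e < 3$ — here I would be a little more careful to squeeze the constant down to $2$, perhaps by treating $k=0$ separately (it contributes $0$ to the difference) so the relevant sum starts at $k=1$ and $\sum_{k\ge1}1/k! = e-1 < 2$.

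So the steps in order are: (1) rewrite $D_r(n)/n!$ via \eqref{eq4} with index $k=n-j$; (2) split off the main term $\binom{n-1}{r}/e$ using the Pascal identity $\binom{n-k}{r}-\binom{n-k-1}{r}=\binom{n-k-1}{r-1}$ together with an index shift that converts $\sum_k\binom{n-k}{r}(-1)^k/k!$ into $\binom{n-1}{r}$ times a partial exponential sum plus an error $\sum_k\binom{n-k-1}{r-1}(-1)^k/k!$; (3) bound the error sum in absolute value by $2\binom{n-1}{r-1}$ using $\binom{n-k-1}{r-1}\le\binom{n-1}{r-1}$ and $\sum_{k\ge1}1/k!<2$ (the $k=0$ term cancels), and bound the partial-exponential-sum remainder against $1/e$ by its first omitted term, absorbing that into the same estimate; (4) combine to get $\bigl|D_r(n)-\tfrac{n!}{e}\binom{n-1}{r}\bigr|<2\,n!\binom{n-1}{r-1}$. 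The main obstacle I anticipate is purely bookkeeping: getting the telescoping/reindexing in step (2) exactly right so that the leftover is honestly $\binom{n-1}{r}$ (and not $\binom{n}{r}$ or a messier mix), and keeping the constant at $2$ rather than $3$; once the inequality holds, dividing by $(n+r)!$ and using $\frac{n!}{(n+r)!}\binom{n-1}{r}\to\frac{1}{r!}$ while $\frac{n!}{(n+r)!}\binom{n-1}{r-1}=O(1/n)\to0$ gives $\frac{D_r(n)}{(n+r)!}\to\frac{1}{r!\,e}$ immediately, completing the proof.
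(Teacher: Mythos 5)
Your strategy is sound and genuinely different from the paper's. The paper starts from identity \eqref{eq3}, writes $D(n-j)=\frac{(n-j)!}{e}+\xi_j$ with $|\xi_j|<\frac12$ (splitting off the $j=n$ term separately, since $D(0)=1$ is \emph{not} within $\frac12$ of $0!/e$ --- that is exactly where the extra $\binom{n-1}{r-1}n!$ in the bound comes from), and then uses the hockey-stick identity $\sum_{j=r}^{n-1}\binom{j-1}{r-1}=\binom{n-1}{r}$ to extract the main term. You instead work from the fully expanded formula \eqref{eq4} and control everything by the alternating-series remainder plus a telescoping of $\binom{n-k}{r}$ against $\binom{n-1}{r}$; this avoids invoking the ``best integer approximation'' property of $D(n)$ altogether, and I have checked that it does close with the constant $2$. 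Two bookkeeping points you flagged do need fixing, though. First, in the difference $\binom{n-k}{r}-\binom{n-1}{r}$ it is the $k=1$ term that vanishes, not the $k=0$ term: the $k=0$ term contributes exactly $\binom{n}{r}-\binom{n-1}{r}=\binom{n-1}{r-1}$, which already consumes half of your error budget. Second, because of that, the crude estimate $\binom{n-1-k}{r-1}\le\binom{n-1}{r-1}$ together with $\sum_{k\ge1}1/k!=e-1<2$ does not by itself leave room for both the $k=0$ contribution and the tail $\binom{n-1}{r}/(n-r+1)!$; you need the sharper telescoped form, namely that the $k\ge2$ terms are bounded by $\binom{n-2}{r-1}\sum_{k\ge2}\frac{k-1}{k!}=\binom{n-2}{r-1}\cdot 1<\binom{n-1}{r-1}$, and then the remaining slack $\binom{n-2}{r-2}+\binom{n-2}{r-1}\sum_{k>n-r}\frac{k-1}{k!}$ does dominate $\binom{n-1}{r}/(n-r+1)!$ for all $n\ge r$. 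With those corrections your argument is complete; the final passage to $\frac{D_r(n)}{(n+r)!}\to\frac{1}{r!e}$ is exactly as in the paper.
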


\begin{proof}
One can easily show that $D(n)$ is the best integer approximation of the number $\frac{n!}{e}$ for $n\in\N_+$. Using identity (\ref{eq3}) we obtain
\begin{equation*}
\begin{split}
& D_r(n)=\sum_{j=r}^n {j-1\choose r-1}\frac{n!}{(n-j)!}D(n-j) = \sum_{j=r}^{n-1} {j-1\choose r-1}\frac{n!}{(n-j)!}\left(\frac{(n-j)!}{e}+\xi_j\right)+{n-1\choose r-1}n! \\ = & \sum_{j=r}^{n-1} {j-1\choose r-1}\frac{n!}{e}+\sum_{j=r}^{n-1} {j-1\choose r-1}\frac{n!}{(n-j)!}\xi_j+{n-1\choose r-1}n! \\
= & {n-1\choose r}\frac{n!}{e}+\frac{n!}{(r-1)!}\sum_{j=r}^{n-1} \frac{(j-1)\cdot ...\cdot (j-r+1)}{(n-j)!}\xi_j+{n-1\choose r-1}n!,
\end{split}
\end{equation*}
where $|\xi_j|<\frac{1}{2}$ for $j\in\{r,...,n-1\}$. Hence
\begin{equation}\label{eq5}
\begin{split}
& \left|D_r(n)-\frac{n!}{e}{n-1\choose r}\right|=\left|\frac{n!}{(r-1)!}\sum_{j=r}^{n-1} \frac{(j-1)\cdot ...\cdot (j-r+1)}{(n-j)!}\xi_j+{n-1\choose r-1}n!\right|< \\
< & \frac{n!}{(r-1)!}\sum_{j=r}^{n-1} \frac{(j-1)\cdot ...\cdot (j-r+1)}{2(n-j)!}+{n-1\choose r-1}n!\leq \\
\leq & \frac{n!}{(r-1)!}\sum_{j=r}^{n-1} \frac{(n-1)\cdot ...\cdot (n-r+1)}{2^{n-j}}+{n-1\choose r-1}n!< \\
< & \frac{n!}{(r-1)!}(n-1)\cdot ...\cdot (n-r+1)+{n-1\choose r-1}n!=2{n-1\choose r-1}n!,
\end{split}
\end{equation}
which proves the first part of the statement. In order to prove the second part,
it suffices to divide (\ref{eq5}) by $(n+r)!$ and let $n$ tend to $+\infty$.
\end{proof}

By the saddle point method \cite{Wilf} we can find another way to get the above asymptotic estimation. This approach actually provides an even better approximation.

\begin{thm}We have that
\begin{equation}
\frac{D_r(n)}{n!}=\frac{1}{e}\sum_{k=0}^{r}A(r,k)\binom{n+r-k}{n}+O(\varepsilon^n).\label{est}
\end{equation}
where
\[A(r,k)=\sum_{i=0}^r\frac{(-1)^i}{(k-i)!}\binom{r}{i},\quad((k-i)!=0\;\text{when}\; i>k)\]
and $\varepsilon>0$ is an arbitrarily small real number.
\end{thm}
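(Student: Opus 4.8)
The plan is to extract the asymptotics directly from the exponential generating function $F_r(x)=\dfrac{x^r e^{-x}}{(1-x)^{r+1}}$ via singularity analysis / the saddle point method, exactly as suggested in the text before the statement. The only singularity of $F_r$ in the finite plane is the pole of order $r+1$ at $x=1$, so the dominant contribution to $[x^n]F_r(x)$ comes from the principal part of $F_r$ at $x=1$, and the entire function part contributes only an exponentially small remainder $O(\varepsilon^n)$ for any $\varepsilon>0$.

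Concretely, I would first write the Laurent expansion of $F_r(x)$ at $x=1$. Since $e^{-x}$ is analytic at $x=1$, we set $e^{-x}=\tfrac1e\sum_{m\ge 0}\tfrac{(-1)^m}{m!}(x-1)^m$, and $x^r=\bigl(1+(x-1)\bigr)^r=\sum_{\ell=0}^r\binom r\ell (x-1)^\ell$; multiplying these two series and then dividing by $(1-x)^{r+1}=(-1)^{r+1}(x-1)^{r+1}$ gives the principal part
\[
\frac1e\sum_{k=0}^{r}\frac{A(r,k)}{(1-x)^{r+1-k}},
\]
where, collecting the coefficient of $(x-1)^{k}$ in the product $x^r e^{-x}\cdot e$, one gets precisely $A(r,k)=\sum_{i=0}^r\frac{(-1)^i}{(k-i)!}\binom ri$ (the term with $i=\ell$ from $x^r$ and the term $(x-1)^{k-\ell}$ from $e^{-x}$, re-indexed). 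Then I would use the standard identity $[x^n]\dfrac1{(1-x)^{m}}=\binom{n+m-1}{m-1}$ with $m=r+1-k$, which yields $[x^n]\dfrac1{(1-x)^{r+1-k}}=\binom{n+r-k}{r-k}=\binom{n+r-k}{n}$, and the main term of $\tfrac{D_r(n)}{n!}$ becomes $\tfrac1e\sum_{k=0}^r A(r,k)\binom{n+r-k}{n}$, as claimed.

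For the error term, write $F_r(x)=P(x)+G(x)$, where $P(x)=\tfrac1e\sum_{k=0}^{r}A(r,k)(1-x)^{-(r+1-k)}$ is the principal part at $x=1$ and $G(x)=F_r(x)-P(x)$. By construction $G$ extends to an entire function (the pole at $x=1$ has been cancelled), and on the circle $|x|=1/\varepsilon$ for small $\varepsilon>0$ it is bounded (indeed $|G(x)|=O\bigl(|x|^{r}e^{|x|}\bigr)$ on that circle, a fixed finite bound for fixed $\varepsilon$), so Cauchy's estimate gives $[x^n]G(x)=O(\varepsilon^n)$. Hence $\tfrac{D_r(n)}{n!}=[x^n]F_r(x)=\tfrac1e\sum_{k=0}^r A(r,k)\binom{n+r-k}{n}+O(\varepsilon^n)$, which is \eqref{est}.

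I expect the only real bookkeeping obstacle to be the correct identification of the coefficients $A(r,k)$: one must be careful with the sign $(-1)^{r+1}$ coming from $(1-x)^{r+1}=(-1)^{r+1}(x-1)^{r+1}$, with the re-indexing between the exponent of $(x-1)$ in the product and the index $k$ in the final sum, and with the convention $(k-i)!=\infty$ (equivalently, the term vanishes) when $i>k$, which is exactly the parenthetical remark in the statement. Matching this against the closed form \eqref{eq4} — or simply verifying small cases $r=0,1$ — provides a useful consistency check. Everything else (the binomial series for $(1-x)^{-m}$, the entire-function argument, Cauchy's bound) is routine.
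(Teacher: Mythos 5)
Your proposal is correct and follows essentially the same route as the paper: isolate the principal part of $F_r(x)=\frac{x^re^{-x}}{(1-x)^{r+1}}$ at its unique singularity $x=1$ via the Taylor expansion of $x^re^{-x}$ there, extract the main term with $[x^n](1-x)^{-(r+1-k)}=\binom{n+r-k}{n}$, and bound the contribution of the entire remainder by $O(\varepsilon^n)$. You merely spell out more explicitly the identification of $A(r,k)$ and the Cauchy estimate for the regular part, both of which the paper leaves implicit.
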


\begin{proof} First denote $\frac{x^re^x}{(1-x)^{r+1}}$ by $F_r(x)$.
Then we note that the Laurent expansion of $x^re^{-x}$  around $x=1$ is
\[\frac{1}{e}\sum_{k=0}^{r}A(r,k)(1-x)^k+O\left((1-x)^{r+1}\right),\]
so we can find the principal part of $F_r(x)$ around its unique singularity $x=1$.
This equals
\[PP(F_r(x),1)=\frac{1}{e}\frac{1}{(1-x)^{r+1}}\sum_{k=0}^{r}A(r,k)(1-x)^k.\]
The saddle point method says that the $n$th coefficient of $F_r(x)$ equals
the $n$th coefficient of the principal part plus the contribution from the
regular part $F_r(x)-PP(F_r(x),1)$. The regular part is an entire function,
since $F_r(x)$ has no other singularity other than $x=1$. Therefore the
contribution of the regular part is $O(\varepsilon^n)$ for an arbitrary
$\varepsilon>0$. So we have that
\[\frac{D_r(n)}{n!}\sim[x^n]\frac{1}{e}\frac{1}{(1-x)^{r+1}}\sum_{k=0}^{r}A(r,k)(1-x)^k+O(\varepsilon^n).\]
The coefficients of $x^n$ on the right hand side can be found easily, thus we finally arrive at the statement of the theorem.
\end{proof}

\begin{rem} The above theorem is a refinement of $\frac{D_r(n)}{(n+r)!}\to\frac{1}{r!e}$. To see this, we write out the particular case $r=2$:
\[\frac{(-1)^n}{e}\sum_{k=0}^{2}A(2,k)\binom{k-1-2}{n}=\frac{1}{2e}(n^2+n-1),\]
hence
\[\frac{D_2(n)}{(n+2)!}=\frac{1}{2e}\frac{n^2+n-1}{(n+1)(n+2)}+O(\varepsilon^n)\to\frac{1}{2e},\]
indeed.

Similarly for $r=3$:
\[\frac{(-1)^n}{e}\sum_{k=0}^{3}A(3,k)\binom{k-1-3}{n}=\frac{1}{2e}(n^3-4n+1),\]
and then
\[\frac{D_3(n)}{(n+3)!}=\frac{1}{6e}\frac{n^3-4n+1}{(n+1)(n+2)(n+3)}+O(\varepsilon^n)\to\frac{1}{6e}.\]
We note that this approximation is rather close even for small values of $n$. For example,
\[\frac{D_4(8)}{(8+4)!}=0.00351080246\dots,\]
while from the approximation \eqref{est} we get the estimation
\[\frac{D_4(8)}{(8+4)!}\approx0.00351080232\dots.\]
Nine digits already agree for $n$ as small  as $n=8$.
\end{rem}

\section{A connection with the Lah numbers}

\subsection{Probabilistic approach}

The Lah numbers $L(n,k)$ are defined by
\[L(n,k)=\frac{n!}{k!}\binom{n-1}{k-1}.\]
These numbers count the partitions of $n$ elements into $k$ blocks such that the order of the elements in the individual blocks count, but the order of the blocks is not taken into account. Such partitions are often called ordered lists.

We now show that these numbers are connected to the $r$-derangements.

\begin{thm}\label{Lahprop}Let $r\in\N$ and $n\in\N_+$ such that $n\geq r$. Then
\[(r+1)!L(n,r+1)=\sum_{k=1}^n\binom{n}{k}kD_r(n-k).\]
\end{thm}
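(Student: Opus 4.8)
The plan is to prove the identity by comparing exponential generating functions, using the formula $F_r(x)=\frac{x^re^{-x}}{(1-x)^{r+1}}$ established above.

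First I would record the exponential generating function of the Lah numbers in the first variable. Since $L(n,k)=\frac{n!}{k!}\binom{n-1}{k-1}$ and $\sum_{n\ge k}\binom{n-1}{k-1}x^n=\frac{x^k}{(1-x)^k}$ (the negative binomial series), one gets $\sum_{n\ge k}L(n,k)\frac{x^n}{n!}=\frac1{k!}\left(\frac{x}{1-x}\right)^k$. In particular, $(r+1)!\,L(n,r+1)$ is $n!$ times the $n$-th Taylor coefficient of $\left(\frac{x}{1-x}\right)^{r+1}$.

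Next I would recognize the right-hand side as a binomial convolution. The $k=0$ summand is $\binom n0\cdot 0\cdot D_r(n)=0$, so $\sum_{k=1}^n\binom nk kD_r(n-k)=\sum_{k=0}^n\binom nk\,a_k\,D_r(n-k)$ with $a_k=k$, whose exponential generating function is the product of $\sum_{k\ge0}\frac{k}{k!}x^k=xe^x$ and $F_r(x)$. Computing this product,
\[
xe^x\cdot\frac{x^re^{-x}}{(1-x)^{r+1}}=\frac{x^{r+1}}{(1-x)^{r+1}}=\left(\frac{x}{1-x}\right)^{r+1},
\]
which is precisely the generating function found in the previous step. Comparing $n$-th coefficients of these two equal power series gives $(r+1)!\,L(n,r+1)=\sum_{k=1}^n\binom nk kD_r(n-k)$ for every $n\ge r$; for $n<r$ both sides vanish.

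The computation is short and essentially obstruction-free; the only points needing a little care are the standard negative-binomial identity behind the Lah generating function and the harmless insertion of the vanishing $k=0$ term so that the sum takes the clean binomial-convolution form. If one instead wanted a purely combinatorial proof, the difficulty would shift: writing the left side as $(r+1)!\,L(n,r+1)=n!\binom{n-1}{r}$, one must biject a permutation of an $n$-set carrying $r$ marked internal gaps against the data of a marked nonempty subset together with an $r$-derangement on the complement, and reconciling the index shift between the $n$-set and the $(n-k+r)$-letter permutation underlying $D_r(n-k)$ is the genuinely delicate step there.
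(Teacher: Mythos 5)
Your generating-function proof is correct: the Lah EGF $\sum_{n\ge k}L(n,k)x^n/n!=\frac{1}{k!}\bigl(\frac{x}{1-x}\bigr)^k$ is right, the right-hand side is indeed the binomial convolution of $(k)_{k\ge0}$ with $(D_r(m))_{m\ge0}$, and the cancellation $xe^x\cdot\frac{x^re^{-x}}{(1-x)^{r+1}}=\bigl(\frac{x}{1-x}\bigr)^{r+1}$ settles the identity coefficient by coefficient. This is, however, a genuinely different route from the paper, which gives two proofs, neither using generating functions. The first is probabilistic: it computes the expected number of fixed points of a uniformly random permutation in the class $\mathcal{P}_{n,r}$ (permutations of $n+r$ letters in which the first $r$ letters are non-fixed and in distinct cycles) in two ways, once by conditioning on the number $k$ of fixed points, which produces the sum $\sum_k\binom{n}{k}kD_r(n-k)$, and once by linearity of expectation, which gives $nP_{n-1,r}/P_{n,r}=(n-r)/n$; the Lah number then appears when $P_{n,r}\frac{n-r}{n}$ is rewritten. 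The second is a double count of partitions of an $n$-set into $r+1$ ordered lists. Your argument is shorter and essentially mechanical once $F_r(x)=\frac{x^re^{-x}}{(1-x)^{r+1}}$ is available (which the paper has established at that point), at the cost of the combinatorial insight the paper extracts along the way, notably the fact that the expected number of fixed points in $\mathcal{P}_{n,r}$ equals $(n-r)/n$. One tiny remark: your closing sentence about $n<r$ is harmless but unnecessary, since the theorem assumes $n\ge r$; and there is no need to pursue the bijective variant you sketch at the end, as the paper's second proof already supplies a combinatorial argument (via ordered lists rather than the marked-gaps bijection you describe).
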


\begin{proof} The proof uses a probabilistic argument. Let $\mathcal{P}_{n,r}$ be the set of permutations on $n+r$ elements such that the first $r$ elements are not fixed points and they are in different cycles. It is easy to see that
\begin{equation}
P_{n,r}:=|\mathcal{P}_{n,r}|=\frac{n!}{r!}n(n-1)\cdots(n-r+1)\quad(n\ge r).\label{Pnr}
\end{equation}

We define the probability distribution
\[p_k^{(n)}=\frac{\binom{n}{k}D_r(n-k)}{P_{n,r}}\quad(k=0,1,\dots,n).\]
Here $p_k^{(n)}$ is the probability of the event that if we take a
permutation from $\mathcal{P}_{n,r}$ randomly and uniformly, then
this permutation contains $k$ fixed points. Let $X$ be the number of
fixed points in such a permutation. Then the expectation of $X$ is
\[\frac{1}{P_{n,r}}\sum_{k=1}^n\binom{n}{k}kD_r(n-k).\]

On the other hand, this expectation can be determined as follows.
Taking a point from $\{r+1,r+2,\dots,r+n\}$ ($1,2,\dots,r$
cannot be fixed points, by definition), the probability that it
is a fixed point equals $\frac{P_{n-1,r}}{P_{n,r}}$. Summing
over all possible points and using the linearity of the expectation,
we get that it equals
\[n\frac{P_{n-1,r}}{P_{n,r}}=\frac{n-r}{n},\]
by \eqref{Pnr}. In the particular case when $r=0$ we get back the classical fact that the expected number of fixed points in a randomly chosen permutation is one.

At this point we have that
\[\sum_{k=1}^n\binom{n}{k}kD_r(n-k)=P_{n,r}\frac{n-r}{n}.\]
This, by using \eqref{Pnr} can be rerewritten by using the Lah numbers.
\end{proof}

%

\subsection{Combinatorial approach}

Now we provide another proof of Theorem \ref{Lahprop} which, in turn, uses a combinatorial argument.

First, let us assume that $r=0$. Then the equality of Theorem \ref{Lahprop} takes the form $n!=\sum_{k=1}^n {n\choose k}kD(n-k)$ and it can be justified by the fact that the expected number of fixed points of a random permutation is equal to $1$ (each permutation of a set with $n$ elements can be treated as a derangement of a set with $n-k$ elements, where $k$ is the number of fixed points of this permutation).

Now, consider the case $r>0$. $L(n,r+1)$ is the number of partitions of $n$ elements into $r+1$ sequences, where the order of sequences is not mentioned. Hence $(r+1)!L(n,r+1)$ is the number of partitions of $n$ elements into $r+1$ sequences, where their order is taken into account. On the other hand, we can determine each partition in the following way. At first, we fix a number $j\in\{r,...,n\}$ to be a number of elements creating first $r$ lists. Next, we choose these $j$ elements and set them in some sequence (which can be done in $\frac{n!}{(n-j)!}$ ways). Then we split this sequence into $r$ lists by picking $r-1$ elements from $j-1$ (in ${j-1\choose r-1}$ ways) being first elements of the second, third, ... and $r$-th list (the first element in the beginning sequence becomes the first element of the first list). Finally, we build the $r+1$-st list from the remaining $n-j$ elements (in $(n-j)!$ ways).
We thus obtain the equality
\[(r+1)!L(n,r+1)=\sum_{j=r}^n {j-1\choose r-1}\frac{n!}{(n-j)!}(n-j)!.\]
Now we are making use of the simple fact that
\[(n-j)!=\sum_{k=0}^{n-j} {n-j\choose k}D(n-j-k)\]
together with \eqref{eq3} in order to obtain the following chain of equalities:
\begin{equation*}
\begin{split}
& \sum_{j=r}^n {j-1\choose r-1}\frac{n!}{(n-j)!}(n-j)! = \sum_{j=r}^n {j-1\choose r-1}\frac{n!}{(n-j)!}\sum_{k=0}^{n-j} \frac{(n-j)!}{k!(n-j-k)!}kD(n-j-k) \\
= & \sum_{j=r}^n\sum_{k=0}^{n-j} {j-1\choose r-1}\frac{n!}{(n-j)!} \frac{(n-j)!(n-k)!}{n!(n-j-k)!}{n\choose k}kD(n-j-k) \\
= & \sum_{j=r}^n\sum_{k=0}^{n-j} {j-1\choose r-1} \frac{(n-k)!}{(n-j-k)!}{n\choose k}kD(n-j-k) \\
= & \sum_{k=0}^{n-r} {n\choose k}k \sum_{j=r}^{n-k} {j-1\choose r-1} \frac{(n-k)!}{(n-j-k)!}D(n-j-k) = \sum_{k=0}^n {n\choose k}kD_r(n-k).
\end{split}
\end{equation*}
So the second proof of Theorem \ref{Lahprop} is complete.

\section{Polynomials related to $r$-derangements numbers}\label{polyrel}
Let us fix $n\in\N$. We investigate how the values of the sequence
$(D_r(n+r))_{r\in\N}$ can be expressed by values of some polynomial.
Let us use the exact formula for $D_r(n+r)$.
\begin{equation*}
\begin{split}
& D_r(n+r)=\sum_{j=r}^{n+r} {j\choose r}\frac{(n+r)!}{(n+r-j)!}(-1)^{n+r-j}=\sum_{j=0}^{n} {j+r\choose j}\frac{(n+r)!}{(n-j)!}(-1)^{n-j} \\
& =(n+r)_r\sum_{j=0}^{n} \frac{(j+r)_j}{j!}\cdot\frac{n!}{(n-j)!}(-1)^{n-j}=(n+r)_r\sum_{j=0}^{n} (j+r)_j{n\choose j}(-1)^{n-j},
\end{split}
\end{equation*}
where we use the Pochhammer symbol $(n)_r=n\cdot ...\cdot (n-r+1)$
for $r\in\N_+$ and $(n)_0=1$. We define the polynomial
$P_n(X)=\sum_{j=0}^{n} (j+X)_j{n\choose j}(-1)^{n-j}\in\Z[X]$.
Then we can write $D_r(n+r)=(n+r)_rP_n(r)$ for $r\in\N$.

Using the identity $\binom{n}{j}=\binom{n-1}{j}+\binom{n-1}{j-1}$ it is easy
to see that the following holds:
\begin{prop}
We have a recurrence relation for polynomials $P_n$:
\begin{equation}\label{rec}
P_0(X)=1, P_n(X)=(X+1)P_{n-1}(X+1)-P_{n-1}(X), n>0.
\end{equation}
\end{prop}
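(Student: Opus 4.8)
The plan is to argue directly from the explicit formula $P_n(X)=\sum_{j=0}^{n}(j+X)_j\binom{n}{j}(-1)^{n-j}$, feeding the Pascal identity $\binom{n}{j}=\binom{n-1}{j}+\binom{n-1}{j-1}$ into the sum, exactly as the sentence preceding the proposition hints. First I would dispose of the base case: when $n=0$ only the index $j=0$ contributes and $(X)_0=1$, so $P_0(X)=1$.

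For $n>0$ I would split $P_n(X)=S_1+S_2$ where
\[S_1=\sum_{j=0}^{n}(j+X)_j\binom{n-1}{j}(-1)^{n-j},\qquad S_2=\sum_{j=0}^{n}(j+X)_j\binom{n-1}{j-1}(-1)^{n-j}.\]
In $S_1$ the term $j=n$ drops because $\binom{n-1}{n}=0$, and pulling out a single factor $-1$ identifies $S_1=-P_{n-1}(X)$. In $S_2$ the term $j=0$ vanishes, so I would shift the index by $j\mapsto j+1$ to get $S_2=\sum_{j=0}^{n-1}(j+1+X)_{j+1}\binom{n-1}{j}(-1)^{n-1-j}$.

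The single nonroutine ingredient is the Pochhammer identity $(j+1+X)_{j+1}=(X+1)\,(j+(X+1))_j$, which follows from the convention $(m)_k=m(m-1)\cdots(m-k+1)$ by peeling the smallest factor $X+1$ off the product $(j+1+X)(j+X)\cdots(X+1)$. Inserting it turns $S_2$ into $(X+1)\sum_{j=0}^{n-1}(j+(X+1))_j\binom{n-1}{j}(-1)^{n-1-j}=(X+1)P_{n-1}(X+1)$, and adding $S_1+S_2$ produces precisely \eqref{rec}. The only spot that demands attention is this identity together with keeping the summation range and the sign $(-1)^{n-j}$ straight under the reindexing; there is no genuine obstacle here, which is presumably why the text rates the proposition as ``easy to see.''
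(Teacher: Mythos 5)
Your proof is correct and follows exactly the route the paper indicates (the paper gives no written proof beyond the hint to apply Pascal's identity to the explicit formula for $P_n$): splitting via $\binom{n}{j}=\binom{n-1}{j}+\binom{n-1}{j-1}$, reindexing, and using $(j+1+X)_{j+1}=(X+1)(j+(X+1))_j$ is precisely the intended argument, and all your index shifts and signs check out.
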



Using the recurrence relation for polynomials $P_n(X)$ we can obtain one
more identity for $r$-derangements numbers. This identity allows us to
write $D_{r+1}(n)$ in terms of $D_r(n)$ and $D_r(n-1)$ and is an easy exercise.
\begin{cor}
For any $r\in\N$ and $n\in\N_+$ we have
\begin{equation}\label{iden}
D_{r+1}(n)=\frac{n-r}{r+1}D_r(n)+\frac{n}{r+1}D_r(n-1).
\end{equation}
\end{cor}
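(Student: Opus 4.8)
The plan is to reduce the identity to the polynomial recurrence \eqref{rec}, using the formula $D_r(n+r)=(n+r)_rP_n(r)$ established in Section~\ref{polyrel}.

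First I would dispose of the degenerate range $n\le r$. Here the left-hand side $D_{r+1}(n)$ vanishes because $n<r+1$; on the right, if $n<r$ then both $D_r(n)$ and $D_r(n-1)$ vanish, while if $n=r$ the factor $n-r$ kills the first summand and $D_r(r-1)=0$ kills the second. So the identity holds trivially in this range, and we may assume $n\ge r+1$.

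Now put $m:=n-r-1\ge 0$. Applying $D_s(k+s)=(k+s)_sP_k(s)$ with the appropriate choices of $(k,s)$ gives
\begin{align*}
D_{r+1}(n)&=(n)_{r+1}P_m(r+1),\\
D_r(n)&=(n)_rP_{m+1}(r),\\
D_r(n-1)&=(n-1)_rP_m(r).
\end{align*}
The recurrence \eqref{rec}, evaluated at degree $m+1>0$ and argument $X=r$, reads $P_{m+1}(r)=(r+1)P_m(r+1)-P_m(r)$, that is, $(r+1)P_m(r+1)=P_{m+1}(r)+P_m(r)$. Multiplying by $(n)_{r+1}$ and using the two Pochhammer identities $(n)_{r+1}=(n-r)(n)_r$ and $(n)_{r+1}=n\,(n-1)_r$ yields
\[(r+1)D_{r+1}(n)=(n-r)(n)_rP_{m+1}(r)+n\,(n-1)_rP_m(r)=(n-r)D_r(n)+n\,D_r(n-1),\]
which is \eqref{iden} after dividing by $r+1$.

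The computation is entirely routine; the only points requiring care are the index shifts (tracking that $P_m$ accompanies $D_{r+1}(n)$ and $D_r(n-1)$ while $P_{m+1}$ accompanies $D_r(n)$) and ensuring that the polynomial recurrence is applied only in its valid range (degree $>0$), which is precisely why the cases $n\le r$ are treated separately at the outset. As a sanity check one can also verify the result directly against the special values \eqref{specvals} and, in the case $r=0$, against the classical recursion \eqref{Dnrec}.
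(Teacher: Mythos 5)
Your proposal is correct and follows exactly the route the paper intends: the identity is stated as a corollary of the recurrence \eqref{rec} together with $D_r(n+r)=(n+r)_rP_n(r)$, and the paper leaves the verification as "an easy exercise," which your index-shifting and Pochhammer manipulations carry out correctly (including the degenerate range $n\le r$).
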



Let us see that for $r=0$ identity \eqref{iden} becomes the well-known
recurrence for numbers of classical derangements $D(n+1)=n(D(n)+D(n-1))$, $n\in\N_+$.

Via a straightforward induction argument based on the parity of $n$,
recurrence \eqref{rec} allows us to factorize reductions of polynomials $P_n$ modulo $2$.

\begin{prop}
For each $n\in\N$ there holds
\begin{equation*}
P_n(X)\pmod{2}=
\begin{cases}
(X^2+X+1)^{\frac{n}{2}}, & \mbox{ if } 2\mid n, \\
X(X^2+X+1)^{\frac{n-1}{2}}, & \mbox{ if } 2\nmid n.
\end{cases}
\end{equation*}
\end{prop}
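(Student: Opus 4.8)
The plan is to prove the claim by induction on $n$, using the recurrence \eqref{rec} and working throughout in $\mathbb{F}_2[X]$. The base case $n=0$ is immediate since $P_0(X)=1$, and $n=1$ follows from $P_1(X)=(X+1)P_0(X+1)-P_0(X)=(X+1)-1=X$, matching the odd branch with exponent $0$. For the inductive step I would split into the two parity cases and peel off \emph{two} levels of the recurrence at once, since the stated formula relates $P_n$ to $P_{n-2}$ (the exponent drops by one each time $n$ increases by two, so a single application of \eqref{rec} mixes the two branches awkwardly).

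The key computation is as follows. Suppose first that $n$ is even, so $n-1$ is odd and by the inductive hypothesis $P_{n-1}(X)\equiv X(X^2+X+1)^{(n-2)/2}\pmod 2$. Then
\begin{align*}
P_n(X)&\equiv (X+1)P_{n-1}(X+1)-P_{n-1}(X)\\
&\equiv (X+1)\cdot(X+1)\bigl((X+1)^2+(X+1)+1\bigr)^{(n-2)/2}-X(X^2+X+1)^{(n-2)/2}\pmod 2.
\end{align*}
Here one uses the crucial observation that $(X+1)^2+(X+1)+1=X^2+2X+1+X+1+1\equiv X^2+X+1\pmod 2$, i.e. the polynomial $X^2+X+1$ is \emph{invariant} under $X\mapsto X+1$ modulo $2$. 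Hence the right side becomes $\bigl((X+1)^2-X\bigr)(X^2+X+1)^{(n-2)/2}\equiv (X^2+X+1)(X^2+X+1)^{(n-2)/2}=(X^2+X+1)^{n/2}\pmod 2$, as desired. The odd case is entirely analogous: if $n$ is odd then $P_{n-1}(X)\equiv (X^2+X+1)^{(n-1)/2}$, and using the same invariance one gets $P_n(X)\equiv (X+1)(X^2+X+1)^{(n-1)/2}-(X^2+X+1)^{(n-1)/2}=X(X^2+X+1)^{(n-1)/2}\pmod 2$, which is the claimed odd-index formula (note $(n-1)/2=(n-1)/2$ matches the exponent $\frac{n-1}{2}$ in the statement).

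The only genuinely substantive point — and the step I would flag as the heart of the argument — is the identity $(X+1)^2+(X+1)+1\equiv X^2+X+1\pmod 2$, equivalently that $X^2+X+1$ is fixed by the substitution $X\mapsto X+1$ in characteristic $2$; everything else is bookkeeping over the two parity residues. It is worth remarking (though not strictly necessary for the proof) why $X^2+X+1$ is the relevant factor: it is the unique monic irreducible quadratic over $\mathbb{F}_2$, and it is precisely the minimal polynomial of a primitive cube root of unity, which is also exactly the object left invariant by the order-$2$ affine map $X\mapsto X+1$ on $\mathbb{F}_4$. A clean way to present the induction is to verify both parities simultaneously by treating $(P_{n-1},P_n)$ as a pair and showing $(P_n,P_{n+1})$ has the right form; but the straightforward two-case induction above is shortest, so I would write it that way.
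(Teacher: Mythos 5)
Your proof is correct and is precisely the straightforward parity-based induction on the recurrence \eqref{rec} that the paper alludes to, the key point being the invariance of $X^2+X+1$ under $X\mapsto X+1$ in characteristic $2$. (Note that despite your opening remark about peeling off two levels of the recurrence, your computation in fact applies it only once, using the inductive hypothesis for $P_{n-1}$ in the opposite parity class --- which is fine and is the cleanest way to organize the argument.)
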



Let us note that coefficients of powers of the trinomial $X^2+X+1$ create well known integer sequences which have combinatorial interpretations. For example, coefficients ${n\choose 0}_2$ of $X^n$ in the expansion of $(X^2+X+1)^n$ are numbers of planar paths from the point $(0,0)$ to the point $(n,0)$, where the only possible moves are $(1,0)$, $(1,1)$ and $(1,-1)$. Another interesting example is the sequence $\left({n\choose 1}_2\right)_{n\in\N}$ of coefficients of $X^{n+1}$ (or equivalently of $X^{n-1}$) in the expansion of $(X^2+X+1)^n$. For $n\in\N_+$ there holds ${n\choose 1}_2=nM_{n-1}$, where $M_n$ is the Motzkin number, which counts all the paths from the point $(0,0)$ to the point $(n,0)$ which do not descend below the $x$-axis and the only possible moves are $(1,0)$, $(1,1)$ and $(1,-1)$. Motzkin number is also the number of ways of drawing any number of nonintersecting chords joining $n$ (labeled) points on a circle (see i.e. \cite{OEIS} and \cite{Weis}).

\section{Periodicity of remainders, prime divisors and $p$-adic valuations}

After studying the analytical and combinatorial properties of the $r$-derangement numbers we turn to number theoretical properties. We prove several modularity results, and study some diophantine equations involving $r$-derangements. Among others, we are going to prove that $D_r(n)$ is a multiple of a factorial number only in finitely many cases.

\subsection{Periodicity}

Let $r\in\N_+$ be fixed. First of all, let us note that $r!\mid D_r(n)$ for all $r\in\N_+$ and $n\in\N$ because if we permute distinguished elements in some $r$-derangement then we obtain another $r$-derangement. Hence, if $d\in\N_+$ is a divisor of $r!$, then the sequence of remainders $(D_r(n)\pmod{d})_{n\in\N}$ is constant and equal to 0. We can prove much more with some additional effort.

\begin{thm}\label{thm_period}
For each $r,d\in\N_+$, if $n_1,n_2\in\N$ and $n_1\equiv n_2\pmod{d}$ then
\begin{equation*}
(-1)^{n_1}D_r(n_1)\equiv (-1)^{n_2}D_r(n_2)\pmod{d}.
\end{equation*}
In particular, the sequence $(D_r(n)\pmod{d})_{n\in\N}$ is periodic of period
\begin{itemize}
\item $d$, if $2\mid d$,
\item $2d$, if $2\nmid d$.
\end{itemize}
\end{thm}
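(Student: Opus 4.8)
The plan is to establish the congruence $(-1)^{n_1}D_r(n_1)\equiv(-1)^{n_2}D_r(n_2)\pmod d$ whenever $n_1\equiv n_2\pmod d$, and then read off the periodicity statement as a corollary. The natural tool is the closed formula \eqref{eq4}, which I rewrite as
\[
(-1)^nD_r(n)=\sum_{j=r}^n\binom{j}{r}\frac{n!}{(n-j)!}(-1)^j
=\sum_{i=0}^{n-r}\binom{n-i}{r}\frac{n!}{i!}(-1)^{n-i}\cdot(-1)^n,
\]
so that, after reindexing by $i=n-j$, we get $(-1)^nD_r(n)=\sum_{i=0}^{n-r}(-1)^i\frac{n!}{i!}\binom{n-i}{r}$. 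The key observation is that for $i\ge d$ the term $\frac{n!}{i!}$ is divisible by $d$ — indeed $\frac{n!}{i!}=n(n-1)\cdots(i+1)$ is a product of $n-i$ consecutive integers, and as soon as $n-i\ge 1$ and $i\ge d-1$ this product contains a full block of $d$ consecutive integers once $n-i\ge d$; more carefully, $\frac{n!}{i!}$ is a product of $n-i$ consecutive integers, so it is divisible by $(n-i)!$, hence divisible by $d$ as soon as $n-i\ge d$. Thus modulo $d$ only the terms with $i\in\{0,1,\dots,d-1\}$ (equivalently $n-i>n-d$) survive, giving
\[
(-1)^nD_r(n)\equiv\sum_{i=0}^{d-1}(-1)^i\frac{n!}{i!}\binom{n-i}{r}\pmod d .
\]

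Next I would argue that each surviving summand, as a function of $n$, is periodic in $n$ modulo $d$. For fixed $i<d$, the quantity $\frac{n!}{i!}\binom{n-i}{r}$ is a polynomial in $n$ of degree $(n-i)+$ wait — no: $\frac{n!}{i!}=n(n-1)\cdots(i+1)$ has $n-i$ factors, so its degree in $n$ is not fixed. The cleaner route is to note $\frac{n!}{i!}\binom{n-i}{r}=\frac{1}{i!\,r!}\cdot n!/(n-i-r)!$ times an adjustment; precisely $\frac{n!}{i!}\binom{n-i}{r}=\binom{n}{i}\binom{n-i}{r}\frac{i!\,r!}{?}$ — better to avoid this and instead write the $i$-th term as $(-1)^i\binom{n}{i}\,i!\,\binom{n-i}{r}=(-1)^i\binom{n}{i}\binom{n-i}{r}i!$. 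Hmm. The genuinely clean formulation: $\frac{n!}{i!}\binom{n-i}{r}=\frac{n!}{i!\,r!\,(n-i-r)!}\,r!=r!\binom{n}{i,\,r,\,n-i-r}$, a multinomial coefficient times $r!$. Rather than chase the algebra here, the point I would actually use is the recurrence — see below — so let me restructure: I will prove the congruence by induction on $n$ using \eqref{iden} together with the basic recursion, OR, more robustly, I observe that \eqref{eq4} expresses $(-1)^nD_r(n)/n!$ as a truncation of a power series and that reducing mod $d$ kills all but finitely many terms, each of which is eventually periodic in $n$; combined with a direct check that the period divides $d$ (even case) or $2d$ (odd case), using that the extra sign $(-1)^n$ has period $2$, the result follows. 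For odd $d$, if $n_1\equiv n_2\pmod d$ but $n_1\not\equiv n_2\pmod 2$, then $D_r(n_1)\equiv -D_r(n_2)$, forcing the period to be $2d$ in general; when $2\mid d$, $n_1\equiv n_2\pmod d$ already forces $n_1\equiv n_2\pmod 2$, so the sign is automatically the same and the period is $d$.

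The main obstacle is the divisibility claim that $\frac{n!}{i!}\equiv 0\pmod d$ for $n-i\ge d$: this is true because a product of $d$ consecutive integers is divisible by $d!$, hence by $d$, and $\frac{n!}{i!}$ contains such a product as a factor whenever $n-i\ge d$. Once that is in hand, the reduction mod $d$ becomes a fixed finite sum $\sum_{i=0}^{d-1}(-1)^i\frac{n!}{i!}\binom{n-i}{r}$, and I must verify that this finite sum is periodic in $n$ with period $d$. For that I note $\frac{n!}{i!}\binom{n-i}{r}$ modulo $d$ depends only on the residues $n\bmod d,\ (n-1)\bmod d,\dots$, i.e. only on $n\bmod d$, because it is a product of integers whose residues mod $d$ are determined by $n\bmod d$ (the factors are $n,n-1,\dots,i+1$ and then $\binom{n-i}{r}$, and $\binom{n-i}{r}$ is a polynomial in $n$ of fixed degree $r$ with rational coefficients having denominators dividing $r!$; here one uses that $r!\mid D_r(n)$ and tracks denominators, or simply that $\binom{m}{r}\bmod d$ is periodic in $m$ with period dividing $d$ when... ) — this last denominator bookkeeping is the genuinely fiddly point, and I would handle it by instead proving the whole theorem by strong induction on $n$ directly from recursion \eqref{Dnrrecur} (reducing mod $d$, the term $(n-1)$ and $(n+r-1)$ have fixed residues mod $d$ under $n\mapsto n+d$, and the $rD_{r-1}(n-1)$ term is dispatched by a nested induction on $r$ with base case $r$ where $r!\equiv$ handled by the remark preceding the theorem), which sidesteps the fractional coefficients entirely. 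That induction is routine bookkeeping once the sign $(-1)^n$ is folded in, so it is the safest way to finish.
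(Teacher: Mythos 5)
Your overall strategy --- reduce the closed formula \eqref{eq4} modulo $d$ and observe that what survives is, up to the sign $(-1)^n$, a polynomial in $n$ with integer coefficients --- is exactly the right one (it is the paper's), but you carry out the reduction backwards. After the substitution $i=n-j$ the factor $\frac{n!}{i!}=n(n-1)\cdots(i+1)$ is a product of $n-i$ consecutive integers, so it is divisible by $d$ precisely when $n-i\ge d$, i.e.\ for \emph{small} $i$ (namely $i\le n-d$). Hence the terms that survive modulo $d$ are those with $i\in\{n-d+1,\dots,n-r\}$, equivalently $j=n-i\in\{r,\dots,d-1\}$ --- not $i\in\{0,\dots,d-1\}$ as you claim (your own ``more careful'' sentence already says the opposite of your conclusion). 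For $n\ge 2d-1$ every term of your sum $\sum_{i=0}^{d-1}(-1)^i\frac{n!}{i!}\binom{n-i}{r}$ has $n-i\ge d$ and is therefore $\equiv 0\pmod d$, so your congruence would force $d\mid D_r(n)$ for all large $n$, which is false. Concretely, for $r=1$, $d=3$, $n=4$: $D_1(4)=D(5)=44\equiv 2\pmod 3$, while $\sum_{i=0}^{2}(-1)^i\frac{4!}{i!}\binom{4-i}{1}=96-72+24=48\equiv 0\pmod 3$. (There is also a sign slip: the reindexed identity is $D_r(n)=\sum_{i=0}^{n-r}(-1)^i\frac{n!}{i!}\binom{n-i}{r}$, with no $(-1)^n$ on the left.) Keeping the correct terms dissolves all of your later denominator worries: the reduced sum is $(-1)^n\sum_{j=r}^{d-1}(-1)^j\binom{j}{r}\,n(n-1)\cdots(n-j+1)$, where $\binom{j}{r}$ is a constant integer and $n(n-1)\cdots(n-j+1)$ is a monic integer polynomial in $n$; so $D_r(n)\equiv(-1)^nf_{r,d}(n)\pmod d$ for a fixed $f_{r,d}\in\Z[X]$, and an integer polynomial takes congruent values at congruent arguments. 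Together with your (correct) sign analysis distinguishing the periods $d$ and $2d$, and a two-line check of the small cases $n<r$ and $r\le n<d$, this is precisely the paper's proof.

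The fallback you propose --- strong induction on $n$ via recursion \eqref{Dnrrecur} --- is not carried out and is not routine. The inductive step itself is fine: writing $E_r(n)=(-1)^nD_r(n)$, the coefficients $n-1$ and $n+r-1$ change by multiples of $d$ under $n\mapsto n+d$, and the $rD_{r-1}(n-1)$ term is handled by an outer induction on $r$ grounded at the classical derangements. The obstruction is the base cases: since \eqref{Dnrrecur} only applies for $n>2$, you must verify $(-1)^nD_r(n)\equiv(-1)^{n+d}D_r(n+d)\pmod d$ directly for $n=0,1,2$, which for $r\ge 3$ and $d>r$ amounts to proving $d\mid D_r(d)$, $d\mid D_r(d+1)$ and $d\mid D_r(d+2)$. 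These are instances of the theorem at generic arguments, not consequences of the initial values \eqref{specvals}, and establishing them essentially requires the closed-formula argument you were trying to avoid. So neither route in the proposal closes as written; the fix is simply to run the first route with the correct set of surviving terms.
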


\begin{proof}
We may assume that $d\nmid r!$. For the proof of periodicty of the sequence $(D_r(n)\pmod{d})_{n\in\N}$ we are going to show that
\begin{equation}\label{eq7}
D_r(n)\equiv (-1)^n\sum_{j=r}^{d-1} (-1)^j {j\choose r}(n)_j\pmod{d}
\end{equation}
for any $n\in\N$. In the case when $n\geq r$ we apply (\ref{eq4}):
\begin{equation*}
\begin{split}
D_r(n) & =\sum_{j=r}^n {j\choose r}\frac{n!}{(n-j)!}(-1)^{n-j}= (-1)^n\sum_{j=r}^{n} (-1)^j {j\choose r}(n)_j\\& \equiv (-1)^n\sum_{j=r}^{d-1} (-1)^j {j\choose r}(n)_j\pmod{d}.
\end{split}
\end{equation*}
Assuming that $n\geq d$, making reduction modulo $d$ we can skip
the summands from $d$th to $n$th because if $d\leq j\leq n$ then
among the (at least $d$) numbers $n-j+1$, $n-j+2$, $...$, $n$ there
is a multiple of $d$ and thus $d\mid (n)_j$. If $r<n<d$ then $(n)_j=0$
for $n+1\leq j\leq d$.

Now we consider the case when $n<r$. Then
obviously $D_r(n)=0$ and $(n)_j=0$ for $r\leq j\leq d$, which establishes
congruence (\ref{eq7}) for $n<r$.

Now define $f_{r,d}(X)=\sum_{j=r}^{d-1} (-1)^j {j\choose r}(X)_j\in\Z[X]$. Then congruence (\ref{eq7}) takes the form
\begin{equation*}
D_r(n)\equiv (-1)^nf_{r,d}(n)\pmod{d},\mbox{ } n\in\N,
\end{equation*}
from where we obtain Theorem \ref{thm_period}.
\end{proof}

We can strengthen the divisibility $r!\mid D_r(n)$ for all $r\in\N_+$ and $n\in\N$. Namely, by formula (\ref{eq4}) we obtain easily that $\frac{n!}{(n-r)!}\mid D_r(n)$, where $r\in\N_+$ and $n\geq r$. Indeed,
\begin{equation*}
D_r(n)=\sum_{j=r}^n {j\choose r}\frac{n!}{(n-j)!}(-1)^{n-j}=\frac{n!}{(n-r)!}\sum_{j=r}^n {j\choose r}\frac{(n-r)!}{(n-j)!}(-1)^{n-j}=(n)_r\sum_{j=r}^n {j\choose r}(n-r)_{j-r}(-1)^{n-j}.
\end{equation*}

Let us define
\[C_r(n):=\frac{1}{(n)_r}D_r(n)=\sum_{j=r}^n {j\choose r}(n-r)_{j-r}(-1)^{n-j}\]
for $r\in\N_+$ and $n\geq r$. Note that $C_1(n)=\frac{D_1(n)}{n}=\frac{D(n+1)}{n}$ for $n\in\N_+$. Arithmetic properties of numbers $\frac{D(n+1)}{n}$ were studied in \cite{Mi}. Now we will study properties of numbers $C_r(n)$, $n\geq r$, for arbitrary $r\in\N_+$.

Analogously as for congruence (\ref{eq7}), we are able to obtain the following one:
\begin{equation}\label{eq8}
C_r(n)=\sum_{j=r}^{n} {j\choose r}(-1)^{n-j}(n-r)_{j-r}\equiv (-1)^n\sum_{j=r}^{r+d-1} {j\choose r}(-1)^j(n-r)_{j-r}\pmod{d}.
\end{equation}
Let us define
\begin{equation*}
\hat{f}_{r,d}(X):=\sum_{j=r}^{r+d-1} {j\choose r}(-1)^j(X-r)_{j-r}=\frac{f_{r,r+d}(X)}{X(X-1)...(X-r+1)}\in\Z[X].
\end{equation*}
Then congruence (\ref{eq8}) takes the form
\begin{equation}\label{eq9}
C_r(n)\equiv (-1)^n\hat{f}_{r,d}(n)\pmod{d}, \mbox{ } n\geq r.
\end{equation}
We thus obtain analogous result on periodicity of the sequence of remainders $(C_r(n)\pmod{d})_{n\geq r})$.

\begin{thm}\label{p2}
For each $r,d\in\N_+$, if $n_1,n_2\geq r$ and $n_1\equiv n_2\pmod{d}$ then
\begin{equation*}
(-1)^{n_1}C_r(n_1)\equiv (-1)^{n_2}C_r(n_2)\pmod{d}.
\end{equation*}
In particular, the sequence $(C_r(n)\pmod{d})_{n\geq r}$ is periodic of period
\begin{itemize}
\item $d$, if $2\mid d$,
\item $2d$, if $2\nmid d$.
\end{itemize}
\end{thm}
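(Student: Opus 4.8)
The plan is to mimic closely the proof of Theorem~\ref{thm_period}, replacing the polynomial $f_{r,d}$ by $\hat{f}_{r,d}$ and working with the truncated sum that already appears in congruence~(\ref{eq8}). The heart of the argument is to establish that congruence rigorously; once it is in the polynomial form~(\ref{eq9}), everything else follows immediately from the fact that $\hat{f}_{r,d}\in\Z[X]$.

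First I would prove congruence~(\ref{eq8}). Starting from the displayed identity $C_r(n)=\sum_{j=r}^n\binom{j}{r}(n-r)_{j-r}(-1)^{n-j}$, valid for $n\geq r$, I reduce it modulo $d$. If $j\geq r+d$ then $(n-r)_{j-r}=(n-r)(n-r-1)\cdots(n-j+1)$ is a product of $j-r\geq d$ consecutive integers, hence contains a multiple of $d$ and vanishes modulo $d$; so all summands with $j\geq r+d$ may be dropped. When $r\leq n<r+d$ the remaining "missing" summands (those with $n<j\leq r+d-1$) already vanish identically, since $(n-r)_{j-r}=0$ whenever $j>n$. This yields
\[
C_r(n)\equiv(-1)^n\sum_{j=r}^{r+d-1}\binom{j}{r}(-1)^j(n-r)_{j-r}\pmod d,
\]
which is exactly~(\ref{eq8}), in full analogy with the derivation of~(\ref{eq7}) in Theorem~\ref{thm_period}.

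Next I would record the algebraic identity behind the displayed formula for $\hat{f}_{r,d}$: since $(X)_j=(X)_r\,(X-r)_{j-r}$ for $j\geq r$, we get $f_{r,r+d}(X)=(X)_r\,\hat{f}_{r,d}(X)$, and $\hat{f}_{r,d}(X)=\sum_{j=r}^{r+d-1}\binom{j}{r}(-1)^j(X-r)_{j-r}$ is a $\Z$-linear combination of the monic integer polynomials $(X-r)_{j-r}$, hence lies in $\Z[X]$. With this, congruence~(\ref{eq8}) reads $C_r(n)\equiv(-1)^n\hat{f}_{r,d}(n)\pmod d$ for all $n\geq r$, that is, congruence~(\ref{eq9}).

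Finally, the conclusion: if $n_1,n_2\geq r$ and $n_1\equiv n_2\pmod d$, then $\hat{f}_{r,d}(n_1)\equiv\hat{f}_{r,d}(n_2)\pmod d$ because $\hat{f}_{r,d}$ has integer coefficients, so by~(\ref{eq9})
\[
(-1)^{n_1}C_r(n_1)\equiv\hat{f}_{r,d}(n_1)\equiv\hat{f}_{r,d}(n_2)\equiv(-1)^{n_2}C_r(n_2)\pmod d.
\]
For the periodicity claim, apply this with $n_2=n_1+d$: if $d$ is even then $(-1)^{n_1}=(-1)^{n_1+d}$, giving $C_r(n)\equiv C_r(n+d)\pmod d$ and period $d$; if $d$ is odd the sign flips, so one instead takes $n_2=n_1+2d$, where the sign is restored, giving period $2d$. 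The only step requiring genuine care is the truncation in~(\ref{eq8}) — in particular the bookkeeping of the boundary range $r\leq n<r+d$ — but this is entirely parallel to the corresponding argument for~(\ref{eq7}), so no new difficulty arises.
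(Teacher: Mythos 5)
Your proposal is correct and follows exactly the route the paper takes (and largely leaves implicit): establish the truncated congruence~(\ref{eq8}) from the closed form of $C_r(n)$ by discarding the terms with $j\geq r+d$ (divisible by $d$, as $(n-r)_{j-r}$ is then a product of at least $d$ consecutive integers) and noting the vanishing of $(n-r)_{j-r}$ for $j>n$, then read off the conclusion from $C_r(n)\equiv(-1)^n\hat{f}_{r,d}(n)\pmod{d}$ with $\hat{f}_{r,d}\in\Z[X]$. Your write-up in fact supplies the boundary-case bookkeeping that the paper dismisses as ``analogous'' to Theorem~\ref{thm_period}, so nothing further is needed.
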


\subsection{The set of prime divisors of the sequence $C_r(n)_{n=r}^\infty$}

Now we will prove that for fixed $r\in\N_+$ there are infinitely many prime divisors of numbers $C_r(n)$, $n\in\N$. Define two subsets of the set $\bbb{P}$ of prime numbers:
\begin{equation*}
\cal{A}_r:=\{p\in\bbb{P}:\forall_{n\geq r} p\nmid C_r(n)\}, \mbox{ } \cal{B}_r:=\bbb{P}\bs\cal{A}_r.
\end{equation*}

\begin{thm}
The set $\cal{B}_r$ is infinite for any positive integer $r$.
\end{thm}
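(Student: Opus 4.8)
The plan is to run a Euclid-type argument, using the periodicity result of Theorem~\ref{p2} together with two elementary facts: the boundary value $C_r(r)=1$, and the fact that $C_r(n)\to+\infty$. The value $C_r(r)=1$ is immediate, either because the defining sum $\sum_{j=r}^{n}\binom{j}{r}(n-r)_{j-r}(-1)^{n-j}$ collapses to its single $j=r$ term when $n=r$, or because $C_r(r)=D_r(r)/(r)_r=r!/r!$. For the growth, I would invoke Theorem~\ref{t2}: it gives $D_r(n)\ge \tfrac{n!}{e}\binom{n-1}{r}-2n!\binom{n-1}{r-1}=n!\binom{n-1}{r-1}\bigl(\tfrac{n-r}{er}-2\bigr)$; dividing by $(n)_r$ and using $\tfrac{n!\binom{n-1}{r-1}}{(n)_r}=\tfrac{(n-1)!}{(r-1)!}$, one obtains $C_r(n)\ge \tfrac{(n-1)!}{(r-1)!}\bigl(\tfrac{n-r}{er}-2\bigr)$, which tends to $+\infty$. (Alternatively one may just observe that $D_r(n)$ grows at least factorially, since all terms of the recursion \eqref{Dnrrecur} are positive, while $(n)_r$ grows only polynomially.)

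Now suppose for contradiction that $\mathcal B_r$ is finite. It is nonempty: for instance $C_r(r+2)=r^2+r+1\ge 3$, so some prime divides it, and that prime lies in $\mathcal B_r$. Write $\mathcal B_r=\{p_1,\dots,p_k\}$ with $k\ge 1$ and put $d:=p_1\cdots p_k\ge 2$. For any $n\ge r$ with $n\equiv r\pmod d$, Theorem~\ref{p2} applied with $n_1=n$, $n_2=r$ gives $(-1)^nC_r(n)\equiv (-1)^rC_r(r)=(-1)^r\pmod d$, hence $C_r(n)\equiv (-1)^{n-r}\equiv \pm 1\pmod d$, so $\gcd(C_r(n),d)=1$.

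Finally, using the growth estimate, choose such an $n$ (with $n\equiv r\pmod d$, and such $n$ exist arbitrarily large) so large that $C_r(n)>1$. Then $C_r(n)$ has a prime divisor $p$; since $n\ge r$ and $p\mid C_r(n)$, the definition of $\mathcal B_r$ forces $p\in\mathcal B_r$, so $p\mid d$. This contradicts $\gcd(C_r(n),d)=1$. Therefore $\mathcal B_r$ is infinite.

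The argument is essentially forced once Theorem~\ref{p2} is available, so there is no deep obstacle. The only points needing a moment's care are (i) the identity $C_r(r)=1$, which is exactly what makes the residues of $C_r$ along the progression $n\equiv r\pmod d$ be units modulo $d$ (this is the heart of the Euclid-style step), and (ii) checking that $C_r(n)$ is genuinely unbounded, so that a large term in the progression is forced to acquire a prime divisor, necessarily one outside $\{p_1,\dots,p_k\}$. Both are routine; the mild technicality is simply to note that the residue class $n\equiv r\pmod d$ contains arbitrarily large integers $\ge r$, which is automatic.
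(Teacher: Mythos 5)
Your argument is correct and is essentially the paper's own proof: assume $\mathcal{B}_r$ finite, let $d$ be the product of its elements, use Theorem~\ref{p2} together with $C_r(r)=1$ to see that $C_r(n)\equiv\pm1\pmod d$ along the progression $n\equiv r\pmod d$, and derive a contradiction from $C_r(n)\to+\infty$, which follows from Theorem~\ref{t2}. Your extra checks (nonemptiness of $\mathcal{B}_r$ via $C_r(r+2)=r^2+r+1$, and the explicit lower bound $C_r(n)\ge\frac{(n-1)!}{(r-1)!}\bigl(\frac{n-r}{er}-2\bigr)$) are correct refinements of details the paper treats more briefly.
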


\begin{proof}
Assume that $\cal{B}_r = \{p_1, ..., p_s\}$. Let us put $d:=p_1\cdot ...\cdot p_s$.
By Theorem \ref{p2} the sequence $((-1)^nC_r(n) \pmod d)_{n\geq r}$ has period $d$.
Since $C_r(r)=1$, we have $C_r(2dm + r)\equiv\pm 1(mod d)$. Hence, $\gcd(p_i,C_r(2dm + r)) = 1$ for $i\in\{1, 2, . . . ,s\}$. Thus, either some prime other than one of $p_1, p_2, . . . , p_s$ divides $C_r(2dm + r)$ for some $m\in\N$ (an obvious contradiction), or $C_r(2dm + r) = 1$ for all $m\in\N$. On the other
hand, $\lim_{n\rightarrow +\infty}\frac{C_r(n)}{n!}
= \lim_{n\rightarrow +\infty}\frac{D_r(n)}{n!\cdot n(n-1)\cdot ...\cdot (n-r)}
= \lim_{n\rightarrow +\infty}\frac{D_r(n)}{(n+r)!} = \frac{1}{e\cdot r!}$,
by Theorem \ref{t2}. This fact implies that $C_r(n)\rightarrow +\infty$,
when $n\rightarrow +\infty$, and this is a contradiction.
\end{proof}

For a given prime number $p$ it is easy to verify whether $p\in\cal{A}_r$. Because of periodicity of the sequence $((-1)^nC_r(n) \pmod p)_{n\geq r}$ it suffices to check that $p$ divides none of the finitely many numbers $C_r(n)$, $n\in\{r,...,p+r-1\}$.

\begin{con}\label{con1}
The set $\cal{A}_r$ is infinite. Moreover, $\lim_{n\rightarrow +\infty} \frac{\sharp (\cal{A}_r\cap\{1,...,n\})}{\sharp (\bbb{P}\cap\{1,...,n\})} = \frac{1}{e}$.
\end{con}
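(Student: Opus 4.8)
The statement in question is Conjecture~\ref{con1}, which has two parts: that $\mathcal{A}_r$ is infinite, and that its relative density inside the primes equals $1/e$. Since this is stated as a conjecture rather than a theorem, I do not expect a complete proof to exist; what I can offer is a heuristic framework and a partial reduction that makes the $1/e$ prediction plausible.

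The plan is to reinterpret membership $p\in\mathcal{A}_r$ in terms of the polynomial congruence already established. By congruence~\eqref{eq9}, for $n\geq r$ we have $C_r(n)\equiv(-1)^n\hat f_{r,d}(n)\pmod d$, and taking $d=p$ prime, $p\in\mathcal{A}_r$ if and only if $\hat f_{r,p}(n)\not\equiv 0\pmod p$ for every residue $n$ in a full period, i.e.\ if and only if the polynomial $\hat f_{r,p}(X)\in\mathbb{F}_p[X]$ has no root in $\mathbb{F}_p$. Now observe that $\hat f_{r,p}(X)=\sum_{j=r}^{r+p-1}\binom{j}{r}(-1)^j(X-r)_{j-r}$; reducing the falling factorials $(X-r)_{j-r}$ modulo $p$ and using that $(X-r)_{p}\equiv (X-r)^p-(X-r)\equiv 0$ on $\mathbb{F}_p$-points, one should be able to show that on $\mathbb{F}_p$ the function $n\mapsto(-1)^n\hat f_{r,p}(n)$ agrees with $n\mapsto C_r(n)\bmod p$ in the narrower range $n\in\{r,\dots,r+p-1\}$, which matches the remark following the conjecture. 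So the first step is to make this identification precise: $p\in\mathcal{A}_r$ iff $p\nmid\prod_{n=r}^{r+p-1}C_r(n)$ iff a certain explicit degree-$(p-1)$ polynomial over $\mathbb{F}_p$ is zero-free.

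The heuristic for the density then proceeds as follows. Recall $C_r(n)=\frac{D_r(n)}{(n)_r}$ and $D_r(n)/n!\to 1/e$ up to the binomial factor $\binom{n-1}{r}$; more usefully, $C_r(n)\sim\frac{n!}{e\,r!}\cdot\frac{1}{\text{(lower order)}}$, so modulo a fixed prime $p$ the values $C_r(n)$ ``should'' behave like the reduction of $n!/(r!\,e)$ — and indeed one can write an exact finite formula $C_r(n)\equiv(-1)^n\sum_{i=0}^{p-1}\frac{(-1)^i}{i!}\binom{n-r}{i}\cdot(\text{stuff})\pmod p$ mimicking the derangement closed form~\eqref{Dnclosed}. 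The key modelling assumption is that, as $p$ varies, the $p$ residues $C_r(r),C_r(r+1),\dots,C_r(r+p-1)\bmod p$ are ``random'' among the $p$ possible values, subject only to the hard constraint $C_r(r)=1\not\equiv 0$. Under that model, the probability that none of the remaining $p-1$ values is $\equiv 0\pmod p$ is $(1-1/p)^{p-1}\to 1/e$. Since these events are (heuristically) independent across distinct primes, the relative density of $\mathcal{A}_r$ in $\mathbb{P}$ should be $\prod$-like averaged to $1/e$; infinitude of $\mathcal{A}_r$ follows a fortiori from positivity of the density.

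The main obstacle — and the reason this remains a conjecture — is exactly that the ``randomness'' assumption is unproven and likely hard: one would need equidistribution of the tuple $\bigl(C_r(r+1),\dots,C_r(r+p-1)\bmod p\bigr)$ as $p\to\infty$, or at least enough control (e.g.\ via character sums or the function-field analogue, treating $\hat f_{r,p}$ as a family of polynomials and counting points) to pin down the proportion of zero-free members. Bounding the number of $\mathbb{F}_p$-rational roots of $\hat f_{r,p}$ on average is reminiscent of Chebotarev-type statistics for polynomial families, but here the polynomial itself depends on $p$ in an essential way (its degree grows with $p$), so standard Galois-theoretic input does not directly apply; some input from the theory of exponential sums or from the Katz–Lang–Weil circle of ideas would be needed, and even then the precise constant $1/e$ rather than merely positive density is delicate. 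For this reason I would present the above only as supporting heuristic, prove rigorously just the elementary reformulation ``$p\in\mathcal{A}_r\iff\hat f_{r,p}$ is zero-free over $\mathbb{F}_p$'', verify the prediction numerically for small $r$, and leave the density claim open.
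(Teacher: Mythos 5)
This statement is a conjecture, and the paper offers only a heuristic for it, which is essentially the one you give: by the period-$p$ periodicity of $((-1)^nC_r(n)\bmod p)_{n\geq r}$, membership $p\in\mathcal{A}_r$ reduces to a zero-free condition over a single period $n\in\{r,\dots,r+p-1\}$, and modelling those residues as random gives probability $(1-1/p)^p\to 1/e$ (your conditioning on $C_r(r)=1$, yielding $(1-1/p)^{p-1}$ instead, is a harmless refinement). Your assessment that the missing ingredient is an unproven equidistribution statement, so that only the reformulation and the heuristic can be made rigorous, matches the paper's treatment.
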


The following heuristic reasoning allows us to claim the second
statement in the conjecture above. If we fix a prime number $p$
and randomly choose a sequence $(a_n)_{n\in\N}$ such that the
sequence of remainders $(a_n\pmod{p})_{n\in\N}$ has period $p$
then the probability that $p$ does not divide any term of this
sequence is equal to $\left(1-\frac{1}{p}\right)^p$. As $p\rightarrow +\infty$
this probability tends to $\frac{1}{e}$. Note that $p\in\cal{A}_r$ if and only if $p$ does not divide any number $C_r(n)$, $n\geq r$ and the sequence $((-1)^nC_r(n)\pmod{p})_{n\geq r}$ is periodic of period $p$. Therefore we suppose that the probability that $p\in\cal{A}_r$ tends to $\frac{1}{e}$, when $p\rightarrow +\infty$ and hence the asymptotic density of the set $\cal{A}$ in the set $\bbb{P}$ is equal to $\frac{1}{e}$.

\subsection{$p$-adic valuation}

Now we are going to study the $p$-adic valuation of
\[C_r(n)=\frac{(n-r)!}{n!}D_r(n).\]
Note that, as we explained earlier, the divisor $\frac{(n-r)!}{n!}$ is taken to cancel out the ``trivial'' divisor $\frac{n!}{(n-r)!}$.

For a given prime number $p$ we define the $p$-adic valuation of
a nonzero rational number $x$ to be an integer $t$ such that $x=\frac{a}{b}p^t$,
where $a,b\in\Z$, $b>0$ and $p\nmid ab$. We denote the $p$-adic valuation
of a number $x$ by $v_p(x)$. Moreover, we set $v_p(0)=+\infty$. In order to
describe the behavior of the $p$-adic valuation of $C_r(n)$ we give a
so-called \emph{pseudo-polynomial decomposition modulo $p$} of the sequence
$(C_r(n))_{n\geq r}$. This is a sequence of pairs $(P_{p,k},g_{p,k})_{k\geq 2}$ such that
\begin{itemize}
\item $P_{p,k}\in\Z[X]$, $g_{p,k}:\{r,r+1,r+2,...\}\rightarrow\Z\bs p\Z$, $k\geq 2$;
\item $C_r(n) \equiv P_{p,k}(n)g_{p,k}(n) \pmod{p^k}$ for all $n\geq r$, $k\geq 2$;
\item $P'_{p,k}(n) \equiv P'_{p,2}(n) \pmod{p}$ for any $k\geq 2$ and $n\geq 2$.
\end{itemize}
Note that the sequence $(\hat{f}_{r,p^k},(-1)^n)_{k\geq 2}$ is a pseudo-polynomial decomposition modulo $p$ of the sequence $(C_r(n))_{n\geq r}$. Indeed, $\hat{f}_{r,p^k}\in\Z[X]$, $p\nmid (-1)^n$ and, by (\ref{eq9}), $C_r(n)\equiv (-1)^n\hat{f}_{r,p^k}(n)\pmod{p^k}$ for any $n\geq r$ and $k\geq 2$. It remains to check that $f'_{r,p^k}(n)\equiv f'_{r,p^2}(n)\pmod{p}$, $n\geq r$, $k\geq 2$:
\begin{equation*}
\begin{split}
f'_{r,p^k}(n) & =\sum_{j=r+1}^{r+p^k-1} {j\choose r}(-1)^j\sum_{s=r}^{j-1}\prod_{i=r, i\neq s}^{j-1} (n-i)\\
& \equiv\sum_{j=r+1}^{r+p^2-1} {j\choose r}(-1)^j\sum_{s=r}^{j-1}\prod_{i=r, i\neq s}^{j-1} (n-i)=f'_{r,p^2}(n)\pmod{p}.
\end{split}
\end{equation*}
If $j\geq r+p^2$ then for each $n\in\N$ in the product $\prod_{i=r}^{j-1} (n-i)$ there appear at least two factors divisible by $p$. Hence $p\mid\prod_{i=r, i\neq s}^{j-1} (n-i)$ for any $s\in\{r, ..., j-1\}$ and via reduction modulo $p$ we can skip the summand ${j\choose r}(-1)^j\sum_{s=r}^{j-1}\prod_{i=r, i\neq s}^{j-1} (n-i)$ for $j\geq r+p^2$.

In the followings we need a result which describes the $p$-adic valuation of a sequence $(a_n)_{n\in\N}$ with pseudo-polynomial decomposition modulo $p$ (see \cite[Theorem 1, p. 4]{Mi}).

\begin{thm}[Hensel's lemma for pseudo-polynomial decomposition modulo $p$]\label{t3}
Let $p$ be a prime number, $k\in\N_+, n_k\in\N$ be such that $p^k\mid a_{n_k}$ and assume that $(a_n)_{n\in\N}\subset\Z$ has a pseudo-polynomial decomposition modulo $p$. Let us denote this decomposition by $(P_{p,k},g_{p,k})_{k\geq 2}$ and define
\[q_p(n_k)=\frac{1}{p}\left(\frac{a_{n_k+p}}{g_{p,2}(n_k+p)}-\frac{a_{n_k}}{g_{p,2}(n_k)}\right).\]
\begin{itemize}
\item If $v_p(q_p(n_k))=0$ then there exists a unique $n_{k+1}$ modulo $p^{k+1}$ for which $n_{k+1}\equiv n_k \pmod{p^k}$ and $p^{k+1}\mid{a_n}$ for all $n\equiv n_{k+1}\pmod{p^{k+1}}$. What is more, $n_{k+1} \equiv n_k-\frac{a_{n_k}}{g_{p,k+1}(n_k)q_p(n_k)} \pmod{p^{k+1}}$.
\item If $v_p(q_p(n_k))>0$ and $p^{k+1}\mid{a_{n_k}}$ then $p^{k+1}\mid{a_n}$ for all $n\equiv n_k \pmod{p^k}$.
\item If $v_p(q_p(n_k))>0$ and $p^{k+1}\nmid{a_{n_k}}$ then $p^{k+1}\nmid{a_n}$ for any $n\equiv n_k \pmod{p^k}$.
\end{itemize}
In particular, if $k=1$, $p\mid{a_{n_1}}$ and $v_p(q_p(n_1))=0$ then for any $l\in\mathbb{N}_+$ there exists a unique $n_l$ modulo $p^l$ such that $n_l\equiv n_1 \pmod{p}$ and $v_p(a_n)\geq{l}$ for all $n\equiv n_l\pmod{p^l}$. Moreover, $n_l$ satisfies the congruence $n_l \equiv n_{l-1}-\frac{a_{n_{l-1}}}{g_{p,l}(n_{l-1})q_p(n_1)} \pmod{p^l}$ for $l>1$.
\end{thm}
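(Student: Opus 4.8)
The plan is to prove Theorem~\ref{t3} by a $p$-adic Newton iteration (``Hensel lifting'') in which the role of the polynomial $f$ at level $k$ is played by the approximating polynomial $P_{p,k}$, with the unit factor $g_{p,k}$ carried along passively and the third axiom $P'_{p,k}(n)\equiv P'_{p,2}(n)\pmod{p}$ used to glue successive levels together.

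First I would record the elementary consequences of the pseudo-polynomial decomposition. Since $g_{p,k}(n)\in\Z\bs p\Z$ is a unit modulo every power of $p$, the congruence $a_n\equiv P_{p,k}(n)g_{p,k}(n)\pmod{p^k}$ gives $v_p(a_n)=v_p(P_{p,k}(n))$ whenever $v_p(P_{p,k}(n))<k$, and $p^k\mid a_n\iff p^k\mid P_{p,k}(n)$; moreover, since $P_{p,k+1}$ has integer coefficients, $a_n\equiv a_{n'}\pmod{p^{k+1}}$ as soon as $n\equiv n'\pmod{p^{k+1}}$, which already reduces all three bulleted conclusions to statements about a single chosen residue class. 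Next I would identify $q_p$: Taylor expansion gives $P_{p,2}(n_k+p)-P_{p,2}(n_k)=pP'_{p,2}(n_k)+p^2(\cdots)$ with the parenthesized quantity an integer, while $a_n/g_{p,2}(n)=P_{p,2}(n)+p^2m_n/g_{p,2}(n)$ for integers $m_n$; subtracting and dividing by $p$ shows that $q_p(n_k)$ is $p$-integral and $q_p(n_k)\equiv P'_{p,2}(n_k)\equiv P'_{p,k+1}(n_k)\pmod{p}$, the last step by the third axiom. In particular, $v_p(q_p(n_k))=0$ is exactly the nonvanishing of $P'_{p,k+1}(n_k)$ modulo $p$.

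The core is the single lifting step from $p^k$ to $p^{k+1}$. Assume $p^k\mid a_{n_k}$ and test $n=n_k+tp^k$ with $t\in\{0,\dots,p-1\}$. Because $2k\ge k+1$, Taylor expansion kills the higher-order terms and $P_{p,k+1}(n_k+tp^k)\equiv P_{p,k+1}(n_k)+tp^kP'_{p,k+1}(n_k)\pmod{p^{k+1}}$; writing $P_{p,k+1}(n_k)=p^kC$ (legitimate since $p^k\mid a_{n_k}$) and using $C\equiv a_{n_k}/(p^kg_{p,k+1}(n_k))\pmod{p}$, the requirement $p^{k+1}\mid a_n$ becomes the linear congruence $C+tP'_{p,k+1}(n_k)\equiv0\pmod{p}$. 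If $v_p(q_p(n_k))=0$ this has the unique solution $t\equiv-C/P'_{p,k+1}(n_k)\pmod{p}$, which pins down $n_{k+1}$ modulo $p^{k+1}$; substituting the expressions for $C$ and $P'_{p,k+1}(n_k)\equiv q_p(n_k)$ yields exactly the stated formula $n_{k+1}\equiv n_k-a_{n_k}/(g_{p,k+1}(n_k)q_p(n_k))\pmod{p^{k+1}}$. If instead $v_p(q_p(n_k))>0$, the coefficient of $t$ vanishes modulo $p$, so $C+tP'_{p,k+1}(n_k)\equiv C\pmod{p}$ is independent of $t$; it is $\equiv0$ (equivalently $p^{k+1}\mid a_{n_k}$) or not, giving the second and third bullets, and since the conclusion no longer depends on $t$ it holds for the entire residue class modulo $p^k$.

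Finally, for the concluding clause I would iterate the first case starting from $k=1$: once $v_p(q_p(n_1))=0$, every subsequent lift satisfies $n_\ell\equiv n_1\pmod{p}$, hence $P'_{p,2}(n_\ell)\equiv P'_{p,2}(n_1)\pmod{p}$ and therefore $q_p(n_\ell)\equiv q_p(n_1)\not\equiv0\pmod{p}$, so the iteration never stalls and produces a unique $n_\ell$ modulo $p^\ell$ with $v_p(a_n)\ge\ell$ on that class; since $q_p(n_{\ell-1})$ and $q_p(n_1)$ have the same reduction modulo $p$, one may replace the former by the latter in the recursive formula. The genuinely delicate point, and the step I expect to require the most care, is the bookkeeping in the lifting step: keeping straight which modulus ($p^k$ versus $p^{k+1}$) each congruence lives in, justifying that the level-$(k+1)$ data $(P_{p,k+1},g_{p,k+1})$ may be used while $q_p$ stays anchored to the level-$2$ data (which is precisely what the derivative axiom buys us), and extracting the explicit closed form for $n_{k+1}$ from the solved linear congruence.
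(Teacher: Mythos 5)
The paper does not prove Theorem~\ref{t3} at all: it is imported verbatim from \cite[Theorem 1]{Mi}, so there is no in-paper proof to compare against. Your Hensel-lifting argument is correct and is exactly the standard proof one expects for this statement: reduce divisibility of $a_n$ to divisibility of $P_{p,k+1}(n)$ via the unit $g_{p,k+1}$, Taylor-expand $P_{p,k+1}(n_k+tp^k)$ modulo $p^{2k}\mid p^{k+1}$ to get the linear congruence $C+tP'_{p,k+1}(n_k)\equiv 0\pmod p$, and use the axiom $P'_{p,k}\equiv P'_{p,2}\pmod p$ to identify $q_p(n_k)$ with $P'_{p,k+1}(n_k)$ modulo $p$; the closed forms for $n_{k+1}$ and $n_l$ then follow since perturbing the unit denominator by a multiple of $p$ changes a quotient with $p^k\mid$ numerator only modulo $p^{k+1}$. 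One small imprecision: your claim that $n\equiv n'\pmod{p^{k+1}}$ forces $a_n\equiv a_{n'}\pmod{p^{k+1}}$ is not literally guaranteed, because $g_{p,k+1}$ is merely a function into $\Z\bs p\Z$ and need not be periodic; what does transfer across a residue class is the equivalence $p^{k+1}\mid a_n\iff p^{k+1}\mid P_{p,k+1}(n)$, which you also state and which is all the argument uses, so the proof stands.
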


Using Theorem \ref{t3} we obtain the below description of the $p$-adic valuation of $C_r(n)$.

\begin{cor}
Let $p$ be a prime number, and $k\in\N_+, n_k\geq r$ be such that $p^k\mid C_r(n_k)$. Let $\hat{q}_p(n_k)=\frac{1}{p}\left((-1)^pC_r(n_k+p)-C_r(n_k)\right)$.
\begin{itemize}
\item If $v_p(\hat{q}_p(n_k))=0$ then there exists a unique $n_{k+1}$ modulo $p^{k+1}$ for which $n_{k+1}\equiv n_k \pmod{p^k}$ and $p^{k+1}\mid{C_r(n)}$ for all $n\geq r$ such that $n\equiv n_{k+1}\pmod{p^{k+1}}$. What is more, $n_{k+1} \equiv n_k-\frac{C_r(n_k)}{\hat{q}_p(n_k)} \pmod{p^{k+1}}$.
\item If $v_p(\hat{q}_p(n_k))>0$ and $p^{k+1}\mid{C_r(n_k)}$ then $p^{k+1}\mid{C_r(n)}$ for all $n\geq r$ such that $n\equiv n_k \pmod{p^k}$.
\item If $v_p(\hat{q}_p(n_k))>0$ and $p^{k+1}\nmid{C_r(n_k)}$ then $p^{k+1}\nmid{C_r(n)}$ for any $n\geq r$ such that $n\equiv n_k \pmod{p^k}$.
\end{itemize}
In particular, if $k=1$, $p\mid{a_{n_1}}$ and $v_p(\hat{q}_p(n_1))=0$ then for any $l\in\mathbb{N}_+$ there exists a unique $n_l$ modulo $p^l$ such that $n_l\equiv n_1 \pmod{p}$ and $v_p(C_r(n))\geq{l}$ for all $n\geq r$ such that $n\equiv n_l\pmod{p^l}$. Moreover, $n_l$ satisfies the congruence $n_l \equiv n_{l-1}-\frac{C_r(n_{l-1})}{(-1)^{n_1+n_{l-1}}\hat{q}_p(n_1)} \pmod{p^l}$ for $l>1$.
\end{cor}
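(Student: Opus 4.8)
The plan is to obtain this corollary as a direct specialization of Theorem~\ref{t3} to the sequence $(C_r(n))_{n\geq r}$, which lies in $\Z$ because $\frac{n!}{(n-r)!}\mid D_r(n)$, equipped with the pseudo-polynomial decomposition modulo $p$ exhibited just above the statement, namely $(\hat{f}_{r,p^k},(-1)^n)_{k\geq 2}$. Thus $a_n=C_r(n)$ and $g_{p,k}(n)=(-1)^n$ for every $k\geq 2$, and the only real task is to rewrite the quantities appearing in Theorem~\ref{t3} in the notation of the statement, keeping in mind that every conclusion must be restricted to indices $n\geq r$, since the decomposition is only valid there.

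First I would compute the auxiliary quantity $q_p$ of Theorem~\ref{t3}. Since $g_{p,2}(n)=(-1)^n$ and $(-1)^{-m}=(-1)^m$, we get
\[
q_p(n_k)=\frac{1}{p}\left(\frac{C_r(n_k+p)}{(-1)^{n_k+p}}-\frac{C_r(n_k)}{(-1)^{n_k}}\right)
=\frac{(-1)^{n_k}}{p}\left((-1)^{p}C_r(n_k+p)-C_r(n_k)\right)=(-1)^{n_k}\hat{q}_p(n_k).
\]
Hence $v_p(q_p(n_k))=v_p(\hat{q}_p(n_k))$, so the three cases of Theorem~\ref{t3} — distinguished by whether $v_p(q_p(n_k))$ equals $0$ or is positive, together with whether $p^{k+1}\mid a_{n_k}$ — coincide term by term with the three cases in the corollary, and the divisibility conclusions $p^{k+1}\mid a_n$ and $p^{k+1}\nmid a_n$ transcribe directly into the corresponding statements about $C_r(n)$ (for $n\geq r$).

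It then remains only to simplify the explicit lifting congruences. In the first case Theorem~\ref{t3} gives $n_{k+1}\equiv n_k-\frac{a_{n_k}}{g_{p,k+1}(n_k)q_p(n_k)}\pmod{p^{k+1}}$; substituting $g_{p,k+1}(n_k)=(-1)^{n_k}$ and $q_p(n_k)=(-1)^{n_k}\hat{q}_p(n_k)$ and using $(-1)^{2n_k}=1$ collapses the denominator to $\hat{q}_p(n_k)$, yielding $n_{k+1}\equiv n_k-\frac{C_r(n_k)}{\hat{q}_p(n_k)}\pmod{p^{k+1}}$. For the final ``in particular'' assertion, the same substitutions in $n_l\equiv n_{l-1}-\frac{a_{n_{l-1}}}{g_{p,l}(n_{l-1})q_p(n_1)}\pmod{p^l}$ turn the denominator into $(-1)^{n_{l-1}}(-1)^{n_1}\hat{q}_p(n_1)=(-1)^{n_1+n_{l-1}}\hat{q}_p(n_1)$, which is exactly the claimed recurrence. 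The whole argument is mechanical; the only thing to watch is the sign bookkeeping, i.e.\ the identities $(-1)^{-m}=(-1)^m$ and $(-1)^{2n}=1$ that eliminate the factors $g_{p,k}(n_k)=(-1)^{n_k}$, so I do not anticipate any genuine obstacle beyond correctly tracking the range $n\geq r$.
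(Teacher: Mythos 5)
Your proposal is correct and follows the paper's own argument: both apply Theorem~\ref{t3} to the sequence $C_r$ with the pseudo-polynomial decomposition $(\hat{f}_{r,p^k},(-1)^n)_{k\geq 2}$ and observe that $q_p(n_k)=(-1)^{n_k}\hat{q}_p(n_k)$, so the sign factors cancel in the lifting congruences. The only (immaterial) difference is that the paper reindexes to $(C_r(n+r))_{n\in\N}$ so the sequence starts at $0$, whereas you keep the index range $n\geq r$ explicit.
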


\begin{proof}
It suffices to apply Theorem \ref{t3} for the sequence $(a_n)_{n\in\N}=(C_r(n+r))_{n\in\N}$ after noting that $\hat{q}_p(n_k)=(-1)^{n_k}q_p(n_k)$, where $q_p(n_k)$ is as specified in Theorem \ref{t3}.
\end{proof}

\section{Some diophantine equations involving $r$-derangements numbers}

In \cite{Mi} it was shown that there are only finitely many solutions of the diophantine equation $D(n)=q\cdot m!$, where $q\in\Q$ is fixed and $n,m\in\N_+$ are unknown variables. The knowledge on the $2$-adic valuation of the derangement number $D(n)$ plays a crucial role in the proof of this result. We can provide analogous reasoning for the $r$-derangements to obtain similar finiteness result for the diophantine equation $D_r(n)=q\cdot m!$, where $r\geq 2$ and the set $\cal{A}_r$ is nonempty.

\begin{thm}\label{p3}
Assume that $r\geq 2$ is fixed and $\cal{A}_r\neq\emptyset$. Then for any nonzero rational parameter $q$ the diophantine equation $D_r(n)=q\cdot m!$ with unknowns $n,m\in\N_+$ has only finitely many solutions. All the solutions $(n,m)$ satisfy one of the following equalities:
\begin{equation*}
\begin{split}
m< & (p-1)\left(1+\log_p m+r\log_p\left(r+er(2+q)\right)-v_p(q)\right), \\
m> & p^{\frac{v_p(q)}{r+1}}\cdot p^{\frac{m}{(r+1)(p-1)}},
\end{split}
\end{equation*} where $p$ is a prime number from the set $\cal{A}_r$.
\end{thm}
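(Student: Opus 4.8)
The plan is to translate the Diophantine equation into a single $p$-adic valuation identity and then squeeze it from two sides. Since $m!>0$, a solution of $D_r(n)=q\cdot m!$ has $D_r(n)\neq 0$; as $D_r(n)=0$ for $n<r$, this forces $n\ge r$, and then $q=D_r(n)/m!>0$. Now fix a prime $p\in\mathcal{A}_r$, which is possible by hypothesis; by the very definition of $\mathcal{A}_r$ we have $p\nmid C_r(n)$ for every $n\ge r$, so from $D_r(n)=(n)_rC_r(n)$ we get $v_p(D_r(n))=v_p((n)_r)$. Taking $v_p$ of both sides of $D_r(n)=q\cdot m!$ therefore yields
\[
v_p((n)_r)=v_p(q)+v_p(m!).
\]
I would then estimate each side by Legendre's formula: on one hand $v_p(m!)=\frac{m-s_p(m)}{p-1}\ge\frac{m}{p-1}-\log_p m-1$, since $m$ has at most $\log_p m+1$ base-$p$ digits, each at most $p-1$; on the other hand $v_p((n)_r)\le\log_p\bigl((n)_r\bigr)\le r\log_p n$ because $(n)_r\le n^r$. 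Combining these gives the master inequality
\[
\frac{m}{p-1}\le 1+\log_p m+r\log_p n-v_p(q),
\]
which ties the two unknowns together.

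To finish I would distinguish two cases by comparing $n$ with the threshold $r+er(2+q)$. If $n\le r+er(2+q)$, then $\log_p n\le\log_p\bigl(r+er(2+q)\bigr)$ and the master inequality is exactly the first claimed bound; moreover $n$ then ranges over a finite set. If instead $n>r+er(2+q)$, I would invoke Theorem~\ref{t2}: writing $\binom{n-1}{r}=\frac{n-r}{r}\binom{n-1}{r-1}$ it gives
\[
D_r(n)>n!\binom{n-1}{r-1}\left(\frac{n-r}{er}-2\right)>q\cdot n!\binom{n-1}{r-1}\ge q\cdot n!,
\]
because $n>r+er(2+q)$ is precisely the statement $\frac{n-r}{er}-2>q$ (this is the origin of the threshold $r+er(2+q)$, and of the constant $2$ coming from the error term in Theorem~\ref{t2}). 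Hence $m!=D_r(n)/q>n!$, so $m>n$, and substituting $\log_p n<\log_p m$ into the master inequality collapses it to $\frac{m}{p-1}+v_p(q)<(r+1)\log_p m$, which is the second claimed bound $m>p^{v_p(q)/(r+1)}\cdot p^{m/((r+1)(p-1))}$.

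It remains to note that each of the two displayed inequalities has only finitely many integer solutions $m$ --- the first because its right-hand side grows only like $\log m$ while its left-hand side grows linearly in $m$, the second because its right-hand side grows exponentially in $m$ --- so in every case $m$ is bounded, and then $n$ is bounded as well (directly in the first case, and by $n<m$ in the second), whence there are only finitely many pairs $(n,m)$. The main technical obstacle is the large-$n$ case: one has to marry the purely analytic lower estimate for $D_r(n)$ from Theorem~\ref{t2} with the arithmetic valuation estimates, and the constant-chasing must be done carefully enough to land exactly on the stated inequalities. The hypothesis $\mathcal{A}_r\neq\emptyset$ is indispensable here, since it is precisely what supplies a prime $p$ with $v_p(C_r(n))=0$ for all $n\ge r$; by Conjecture~\ref{con1} this should hold for every $r$.
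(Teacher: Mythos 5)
Your argument follows essentially the same route as the paper's: both proofs rest on the observation that for $p\in\cal{A}_r$ one has $v_p(D_r(n))=v_p((n)_r)\le r\log_p n$, which is tiny against $v_p(q\cdot m!)\ge v_p(q)+\frac{m}{p-1}-\log_p m-1$ coming from Legendre's formula, combined with the lower bound $D_r(n)>n!\binom{n-1}{r-1}\left(\frac{n-r}{er}-2\right)$ from Theorem \ref{t2} to control the regime where $n$ is large. You only reorganize the bookkeeping: the paper introduces $M(m)=\lceil p^{v_p(q\cdot m!)/r}\rceil\le n$ and derives $m!>M(m)!\ge m!$ from the simultaneous failure of both target inequalities, whereas you split directly on whether $n$ exceeds the threshold $r+er(2+q)$ and, in the large-$n$ case, convert $D_r(n)>q\cdot n!$ into $m>n$ so as to eliminate $n$ from your master inequality. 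This is arguably the cleaner arrangement. One quantitative slip, though: substituting $\log_p n<\log_p m$ into $\frac{m}{p-1}\le 1+\log_p m+r\log_p n-v_p(q)$ yields $\frac{m}{p-1}+v_p(q)<1+(r+1)\log_p m$, not $\frac{m}{p-1}+v_p(q)<(r+1)\log_p m$ as you assert, so what you actually obtain is the second displayed inequality with $v_p(q)$ replaced by $v_p(q)-1$, i.e.\ weakened by a factor $p^{1/(r+1)}$. This is immaterial for the finiteness conclusion, and the paper's own proof commits the equivalent slip at the corresponding step (raising \eqref{ineq7} to the power $\frac{r+1}{r}$ and dividing by $m^{1/r}$ produces the exponent $\frac{v_p(q)+\frac{m}{p-1}-\log_p m}{r}$, not $\frac{v_p(q)+\frac{m}{p-1}-1-\log_p m}{r}$ as claimed before invoking (\ref{ineq6})); still, if you want the constant exactly as stated you must recover that additive $1$ somewhere. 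A last cosmetic point: the first inequality in the statement is strict while you only derive $\le$; strictness is free since $(n)_r<n^r$ for $r\ge 2$, which makes your bound $v_p((n)_r)<r\log_p n$ strict.
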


\begin{proof}
It suffices to prove the statement for $q>0$ because the equation $D_r(n)=q\cdot m!$ has no solutions for $q<0$.

Assume that $D_r(n)=q\cdot m!$. Then $n\geq r$ and $v_p(D_r(n))=v_p\left(\frac{n!}{(n-r)!}\right)=v_p(n(n-1)...(n-r+1))$, since $D_r(n)=\frac{n!}{(n-r)!}C_r(n)$ and $p\nmid C_r(n)$. We thus obtain the following chain of inequalities:
\begin{equation*}
n^r>n(n-1)...(n-r+1)=p^{v_p\left(n(n-1)...(n-r+1)\right)}=p^{v_p(D_r(n))}=p^{v_p(q\cdot m!)}.
\end{equation*}
Hence $n\geq\lceil p^{\frac{v_p(q\cdot m!)}{r}}\rceil$. Let us put $M(m)=\lceil p^{\frac{v_p(q\cdot m!)}{r}}\rceil$. The sequence $(D_r(n))_{n\geq r}$ is increasing, which means that $q\cdot m!=D_r(n)\geq D_r(M(m))$. By Theorem \ref{t2} we know that
\[D(M(m))>\frac{M(m)!}{e}{M(m)-1\choose r}-2M(m)!{M(m)-1\choose r-1}
= M(m)!{M(m)-1\choose r-1}\left(\frac{M(m)-r}{er}-2\right).\]
We thus conclude that
\begin{equation*}
q\cdot m!>M(m)!{M(m)-1\choose r-1}\left(\frac{M(m)-r}{er}-2\right),
\end{equation*}
or, equivalently,
\begin{equation}\label{ineq1}
m!>M(m)!{M(m)-1\choose r-1}\frac{1}{q}\left(\frac{M(m)-r}{er}-2\right).
\end{equation}
However, it is easy to check that if the system of inequalities
\begin{align}
M(m)&\geq m\label{ineq2}\\
M(m)&\geq r\label{ineq3}\\
\frac{1}{q}\left(\frac{M(m)-r}{er}-2\right)&\geq 1\label{ineq4}
\end{align}
is satisfied then (\ref{ineq1}) does not hold (indeed, (\ref{ineq2})
implies that $M(m)!\geq m!$ and (\ref{ineq3}) implies that
${M(m)-1\choose r-1}\geq 1$). First we will investigate
\begin{equation}\label{ineq5}
m\geq (p-1)\left(1+\log_p m + r\log_p\left(r+er(2+q)\right)-v_p(q)\right)
\end{equation}
implies (\ref{ineq2}) and (\ref{ineq3}). We have the following chain of conclusions:
\[m\geq (p-1)\left(1+\log_p m + r\log_p\left(r+er(2+q)\right)-v_p(q)\right)\]
\[\Leftrightarrow \frac{v_p(q)+\frac{m}{p-1}-\log_p m -1}{r}\geq\log_p\left(r+er(2+q)\right)\Leftrightarrow  p^{\frac{v_p(q)+\frac{m}{p-1}-\log_p m -1}{r}}\geq r+er(2+q).
\]
By the definition of $M(m)$ and Legendre's formula $v_p(m!)=\frac{m-s_p(m)}{p-1}$, where $s_p(m)$ is the sum of $p$-ary digits of the number $m$ (see \cite{Le}), we get the inequalities
\begin{equation}\label{ineq6}
M(m)\geq p^{\frac{v_p(q\cdot m!)}{r}}= p^{\frac{v_p(q)+\frac{m-s_p(m)}{p-1}}{r}}> p^{\frac{v_p(q)+\frac{m}{p-1}-1-\log_p m}{r}}.
\end{equation}
That is, $M(m)>r+er(2+q)$, which implies (\ref{ineq3}) and (\ref{ineq4}). The inequality
\begin{equation}\label{ineq7}
m\leq p^{\frac{v_p(q)}{r+1}}\cdot p^{\frac{m}{(r+1)(p-1)}}
\end{equation}
after raising to the power of $\frac{r+1}{r}$ and dividing by $m^{\frac{1}{r}}$ gives that
\begin{equation*}
m\leq p^{\frac{v_p(q)+\frac{m}{p-1}-1-\log_p m}{r}},
\end{equation*}
which, combined with (\ref{ineq6}), implies (\ref{ineq2}).

Summarizing our results, we state that if $m$ satisfies (\ref{ineq5}) and (\ref{ineq7})
then $D_r(n)\neq q\cdot m!$ for any $n\in\N_+$. Moreover, the right-hand side
of (\ref{ineq5}) is a function of the variable $m$ with derivative decreasing
to $0$ as $m\to+\infty$. Similarly, the right-hand side of (\ref{ineq7}) is a
function of $m$ with derivative increasing to $+\infty$ as $m\to+\infty$. Thus
there exists an $m_0\in\N_+$ such that each $m\geq m_0$ satisfies (\ref{ineq5})
and (\ref{ineq7}). Hence $q\cdot m!$ cannot be an $r$-derangement number for any
$n\in\N_+$ if only $m\geq m_0$.
\end{proof}

\begin{ex}
{\rm We will use Theorem \ref{p3} to determine all the $r$-derangements numbers which are factorials, at least for $r\in\{2,3\}$.

If $r=2$ then the numbers $C_2(2)=1$, $C_2(3)=2$ and $C_3(4)=7$ are not divisible by $3$. Hence $3\in\cal{A}_2$. By Theorem \ref{p3}, if $D_2(n)=m!$ then $m<2\left(1+\log_3 m+2\log_3\left(2+6e\right)\right)<13+2\log_3 m$ or $m>3^{\frac{m}{6}}$. This means that $m\leq 18$ and checking $m\in\{1,...,18\}$ one by one we claim that the only solution $(n,m)$ of the diophantine equation $D_2(n)=m!$ is $(2,2)$.

If $r=3$ then the numbers $C_3(3)=1$ and $C_3(4)=3$ are odd. Hence $2\in\cal{A}_3$. By Theorem \ref{p3}, if $D_3(n)=m!$ then $m<1+\log_2 m+3\log_2\left(3+9e\right)<6+\log_2 m$ or $m>2^{\frac{m}{4}}$. This means that $m\leq 15$ and checking $m\in\{1,...,15\}$ one by one we claim that the only solution $(n,m)$ of the diophantine equation $D_3(n)=m!$ is $(3,3)$.}
\end{ex}

It is worth to note that for $r\geq 3$ there are no powers of prime numbers in the sequence of $r$-derangements numbers, since $r!\mid D_r(n)$ and $r!$ is not a power of prime number. If $r=1$ then $D_1(n)=D(n+1)$ for each $n\in\N$ and the problem of diophantine equation $D(n)=p^k$ with unknowns $n,k\in\N_+$ and $p\in\bbb{P}$ ($\bbb{P}$ denotes the set of all prime numbers) was considered in \cite{Mi}. In case $r=2$ the number $D_2(2)=2$ is the only power of a prime number in the sequence of $2$-derangements.

\begin{thm}
The only solution of the diophantine equation $D_2(n)=p^k$ with unknowns $p\in\bbb{P}$ and $n,k\in\N_+$ is $(p,n,k)=(2,2,1)$.
\end{thm}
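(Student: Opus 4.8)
The plan is to combine the recursion $D_2(n)=2D_1(n-1)+(n-1)D_2(n-2)+(n+1)D_2(n-1)$ with the factorization $D_2(n)=n(n-1)C_2(n)$ and known $2$-adic information. First I would observe that $D_2(n)=n(n-1)C_2(n)$ from Section~\ref{polyrel}, so if $D_2(n)=p^k$ then $n(n-1)$ must itself be a power of $p$; since $n$ and $n-1$ are coprime, this forces one of them to be $1$, hence $n=2$ (the case $n=1$ gives $D_2(1)=0$, which is not a prime power). Then $D_2(2)=2$, so $(p,n,k)=(2,2,1)$. This argument is short, but I must double-check the edge cases $n<2$ (where $D_2(n)=0$) and confirm that $C_2(2)=1$ so that $D_2(2)=2$ exactly, which is already listed among the initial values.

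Actually, the cleanest route is: for $n\geq 2$ we have $D_2(n)=n(n-1)C_2(n)$ with $C_2(n)\in\Z$; if this equals $p^k$ then since $\gcd(n,n-1)=1$, both $n$ and $n-1$ are powers of $p$ (including possibly $p^0=1$), but two consecutive positive integers can both be powers of the same prime only when they are $1$ and $2$. Hence $n=2$, giving $p^k=D_2(2)=2$, so $p=2$, $k=1$. I would also note $D_2(n)=0$ for $n<2$, which is not a prime power, closing that gap.

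The one subtlety to verify carefully is that $n-1$ and $n$ being powers of $p$ really does pin down $n=2$: if $n-1=p^a$ and $n=p^b$ with $a,b\geq 0$ and $p^a<p^b$, then $p^b-p^a=1$, forcing $a=0$ and $p^b=2$, hence $p=2$, $b=1$, $n=2$. The only place a reader might object is whether $C_2(n)$ could vanish or be negative, which would break the ``power of $p$'' deduction; but $D_2(n)>0$ for all $n\geq 2$ (these count nonempty sets of permutations, or alternatively the asymptotics $D_2(n)/(n+2)!\to 1/(2e)$ from Theorem~\ref{t2} show positivity for large $n$, and the listed initial values cover the rest), so $C_2(n)=D_2(n)/(n(n-1))$ is a positive integer. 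I do not expect any serious obstacle here; the main point is simply to extract the trivial divisor $n(n-1)$ and invoke Catalan-type reasoning on consecutive prime powers.
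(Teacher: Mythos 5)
Your proof is correct and follows essentially the same route as the paper: both extract the trivial divisor of $D_2(n)$ coming from $\frac{n!}{(n-2)!}\mid D_2(n)$ (equivalently $C_2(n)\in\Z$) and observe that a product of two coprime factors greater than $1$ cannot be a prime power. The only cosmetic difference is that you use the full divisor $n(n-1)$, which disposes of $n=3$ and all larger $n$ uniformly via the consecutive-prime-powers (or coprimality) argument, whereas the paper works with $\frac{n(n-1)}{2}$, splits on the parity of $n\geq 4$, and checks $D_2(3)=12$ by hand.
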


\begin{proof}
We know that $D_2(0)=D_2(1)=0$, $D_2(2)=2$ and $D_2(3)=2\cdot 3!=12$. Let us assume now that $n\geq 4$. Then $\frac{n(n-1)}{2}\mid D_2(n)$ and $D_2(n)>0$. If $2\mid n$ then $\frac{n}{2}$ and $n-1$ are coprime integers greater than $1$, thus $\frac{n(n-1)}{2}$ is not a power of a prime number. If $2\nmid n$ then $\frac{n-1}{2}$ and $n$ are coprime integers greater than $1$ so, again, $\frac{n(n-1)}{2}$ is not a power of a prime number.
\end{proof}

\section*{Acknowledgements}

A part of this paper emerged from a discussion between P. Miska and I. Mez\H{o} on the \emph{Journ\'ees Arithm\'etiques 2015} conference in Debrecen, Hungary.

P. Miska wishes to thank his advisor, Maciej Ulas, for suggestion of considering polynomials $P_n(X)$ from Section \ref{polyrel}.


\begin{thebibliography}{AAA}

\bibitem{Bona}
M. B\'ona, {\it Combinatorics of Permutations}, Chapman \& Hall/CRC, 2004.

\bibitem{Broder}
A. Z. Broder, The $r$-Stirling numbers, Discrete Math. 49 (1984) 241-259.

\bibitem{GKP}
R. L. Graham, D. E. Knuth, O. Patashnik, {\it Concrete Mathematics}, Addison Wesley, 1993.

\bibitem{Le}
A. M. Legendre, {\it Theorie des nombres}, Firmin Didot Freres, Paris, 1830.

\bibitem{Mezo}
I. Mez\H{o}, Periodicity of the last digits of some combinatorial sequences, J. Integer Seq. 17 (2014), Article 14.1.1.

\bibitem{Mi}
P. Miska, Arithmetic properties of the sequence of derangements, J. Number Th. 163 (2016), 114-145.

\bibitem{Nyul}
G. Nyul, G. R\'acz, The $r$-Lah numbers, Discrete Math. 338(10) (2015), 1660-1666.

\bibitem{OEIS}
N. J. A. Sloane, {\it The On-Line Encyclopedia of Integer Sequences}, {\tt https://oeis.org/A002426}, {\tt https://oeis.org/A005717} and {\tt https://oeis.org/A001006}.

\bibitem{Shattuck}
M. Shattuck, Generalizations of Bell number formulas of Spivey and
Mez\H{o}, Filomat (to appear).

\bibitem{Weis}
E. Weisstein, {\it Wolfram Mathworld}, {\tt http://mathworld.wolfram.com/TrinomialCoefficient.html}, {\tt http://mathworld.wolfram.com/MotzkinNumber.html} and\newline {\tt http://mathworld.wolfram.com/CentralTrinomialCoefficient.html}.

\bibitem{Wilf}
H. Wilf, {\it Generatingfunctionology}, second edition. Academic Press, 1994.

\end{thebibliography}
\end{document}